\def\MM{{\mathcal M}}
\def\ZZ{{\mathcal Z}}
\newtheorem{thm}{Theorem}[section]
\newtheorem{lem}[thm]{Lemma}
\newtheorem{cor}[thm]{Corollary}
\newtheorem{defi}[thm]{Definition}
\newtheorem{prop}[thm]{Proposition}
\newcommand{\beqn}{\begin{equation}}
\newcommand{\eeqn}{\end{equation}}
\newcommand{\bear}{\begin{eqnarray}}
\newcommand{\eear}{\end{eqnarray}}
\newcommand{\bean}{\begin{eqnarray*}}
\newcommand{\eean}{\end{eqnarray*}}
\newcommand{\indic}{{\mathbb 1}}
\newcommand{\R}{{\mathbb{R}}}
\newcommand{\T}{{\mathbb{T}}}
\newcommand{\N}{{\mathbb{N}}}
\newcommand{\ep}{\varepsilon}
\newcommand{\vip}{\vskip.2cm}
\newcommand{\E}{\mathbb{E}}
\DeclareMathOperator{\sign}{sign}
\newcommand{\ds}{\displaystyle}
\begin{document}

\title{Mean field limit for the one dimensional  Vlasov-Poisson equation.}

\author{Maxime Hauray}

\address{M. Hauray: LATP, Universit\'e d'Aix-Marseille \& CNRS UMR 7353, 
13453 Marseille Cedex 13, France.}

\email{maxime.hauray@univ-amu.fr}

\subjclass[2000]{76D05, 65C05}

\keywords{1D Vlasov-Poisson equation, Propagation of Chaos, 
Monge-Kantorovich-Wasserstein distance.}

\thanks{}

\begin{abstract}
We consider systems of $N$ particles in dimension one, driven by pair 
Coulombian or gravitational  interactions. When the number of particles 
goes to infinity in the so called mean field scaling, we formally expect convergence 
towards the Vlasov-Poisson equation.
Actually a rigorous proof of that convergence was given by Trocheris 
in~\cite{Tro86}. Here we shall give a simpler proof of this result,  and explain why it implies the so-called ``Propagation of molecular chaos''. 
More precisely, both results will be a direct consequence of a weak-strong
stability result on the one dimensional Vlasov-Poisson equation that is interesting by it own. We also prove the existence of global solutions to the $N$ particles dynamic starting from any initial positions and velocities, and the existence of global solutions to the Vlasov-Poisson equation starting from any measures with bounded first moment in velocity. 
\end{abstract}

\maketitle


\section{Introduction}
\setcounter{equation}{0}
\label{sec:intro}

\subsection{The $N$ particle system}

We study the evolution of $N$ particles interacting via Coulombian or 
gravitational interaction in dimension one. The 
 position and speed of the $i$-th particle will be denoted respectively by 
$X_i^N$ and $V_i^N$, and we will also use the short-cut $Z_i^N=(X_i^N,V_i^N)$. 
The large vector containing the  positions and speeds of all the particles will 
be denoted by $\ZZ^N = (Z_i^N)_{i \le N}$. 
 
 Our results are valid on the torus and on $\R$, and with (repulsive) Coulombian or (attractive) gravitational interactions.  In fact we will neither use the preservation of the energy and the sign of the potential energy, nor the attractive and repulsive character of the force. In that article, we will consider only the case  of Coulombian interaction in the periodic domain $\T$, in order to keep the notation as simple as possible. The adaptation to the case of the unbounded domain $\R$ does not raise many difficulties, and it is in 
some sense simpler, since for instance the interaction potential and force have simpler expression. The adaptation to the gravitational case is also simple. 

So in all the sequel, the positions $X_i$ belong to $\T$, 
and the velocities $V_i$ to $\R$. On the torus $\T$ that we will identify for simplicity to $\Bigl[ - \frac12,\frac12 \Bigr)$, the Coulombian interaction potential $W$ and the associated force $-W'$ are given   by
 \begin{equation} \label{def:PotW}
  W(x) :=  \frac{x^2 - |x|}2, \qquad 
  - W'(x) := 
 \begin{cases}
  -\frac12 -x  &\text{ if } \; x \in \bigl[-\frac12,0\bigr), \\
 \frac12 -x &\text{ if } \; x \in \bigl(0, \frac12\bigr), \\
 0 & \text{ if } \; x = 0.
\end{cases}
\end{equation}
This particular kernel correspond for instance to the situation where we study electrons in a fixed background of ions.
The equality $W'(0)=0$ may seem strange since the interaction force is singular 
at $0$. But, it will be very 
convenient in the sequel, and will allow for many simplification in the 
notation. That convention corresponds to the fact that there is no self-interaction.
Remark that $W'$ may be decomposed in a singular and a non singular part
\begin{equation} \label{eq:Wdec}
\text{for }\;|x| \le \frac12, \quad 
W'(x) = -x + \frac12 \sign (x),
\qquad \sign(x) := \frac x{|x|}   \quad \bigr(= 0 \text{ if } \; x=0 
\bigr) 
\end{equation}
From these formula, we see that the only singularity of the Poisson kernel $W$ 
is that its second derivative is a Dirac $- \delta_0$. But the interaction force 
$-W'$ is globally bounded, so that in this sense it is not so singular.  The situation is very different in larger 
dimensions, where the Coulombian force is always ``strongly'' singular : $\nabla W \notin 
L^{d/(d-1)}$ in dimension $d$.  


The evolution of the $N$ particles is classically driven by the following system 
of $N$ second order ODEs
\begin{equation} \label{eq:Npart}
\forall \; i \le N, \qquad \dot X_i^N = V_i^N, \qquad  \dot V_i^N = - \frac1N \sum_{j  \neq i} W'(X_i^N-X_j^N).
\end{equation}
Remark that thanks to the assumption that there is 
no self interaction ($W'(0)=0$), we may also use a full sum $\sum_{j=1}^N$ 
instead of the more usual $\sum_{j\neq i}$ in the second equation. The $\frac 1N$ factor is not important for a fixed $N$, but it is necessary if we want to obtain the mean field limit  when $N$ goes to infinity. It appears when  the ``physical'' system is observed at the appropriate scales of time, position and speed. Here, we will not emphasize more on this point, and we consider uniquely non-dimensional equations in the sequel.

\medskip
\subsection{About the existence of solution to the system of ODEs}
As the vector-field that drives this system of ODE is singular (not continous), 
the existence of solutions to that system is not completely obvious.
So to be precise, we shall first define properly the notion of solution that we will 
use. We only define global (in time) solutions, because we will always deal with such solutions in the sequel. 
\begin{defi} \label{def:ODErig}
A global solution of the ODE~\eqref{eq:Npart} with initial condition $\ZZ^N(0)$  is an 
application \mbox{$t  \mapsto \ZZ^N(t)$} defined on $\R^+$
such that  for all $t \in \R^+$, for all $i \le N$
\begin{equation} \label{eq:ODEpre}
X^N_i(t)  = X^N_i(0) + \int_0^t V_i^N(s) \,ds, \qquad 
V_i^N(t) = V_i^N(0) - \frac1N  \sum_{j \neq i} \int_0^t  W'(X_i^N(s)-X_j^N(s)) 
\,ds.
\end{equation}
\end{defi}
An alternative definition, equivalent to that one, would be to require $\ZZ^N$ 
to be differentiable almost everywhere in time  and~\eqref{eq:Npart} to hold 
for all $i \le N$ and a.e. $t \in \R^+$.

In our setting, the now usual theory initiated by DiPerna-Lions 
\cite{DipLions} applies and provides existence and uniqueness of a measure 
preserving flow. Historically, the first result that can be applied here 
was given by Bouchut \cite{Bou02} : it covers the case of second order ODEs of 
the previous type with $BV$ force-field.  But we may also apply the more general 
result of Ambrosio \cite{Amb04}, valid for any ODE with $BV$ vector-field with 
bounded divergence (in $L^\infty$ or even in $L^1$). 

As we shall see later, 
the uniqueness of 
the solution to the system of ODEs~\eqref{eq:Npart} is not important for the mean-field limit, 
and the existence of solutions will be enough for our work.   
The existence of a ``global'' measure preserving flow implies the existence of solutions in the sense of Definition~\ref{def:ODErig} for almost all initial conditions. Remark also  that the uniqueness of the global measure preserving flow does not implies anything for a particular initial condition. But this is not an issue here since we are not interested by uniqueness.

\medskip
But here, in our weakly singular setting,  global existence of solutions to~\eqref{eq:Npart} for \emph{any 
initial condition} may be obtained thanks to the theory of \emph{differential 
inclusion}. Using a result by Filippov \cite{Filippov} we will precisely prove 
the following Proposition
\begin{prop} \label{prop:Nexis}
For any initial configuration $\ZZ^N(0)$, there exists at least one global 
solution to the system of ODE~\eqref{eq:Npart} in the sense of 
Definition~\ref{def:ODErig}.
\end{prop}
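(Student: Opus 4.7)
My plan is to invoke Filippov's existence theorem for bounded differential inclusions and then to reconcile the resulting generalized solution with the integral formulation~\eqref{eq:ODEpre}. I would write the system as $\dot\ZZ^N=F(\ZZ^N)$ on $\T^N\times\R^N$, noting that the acceleration components are measurable and globally bounded by $\tfrac12$, since $|W'|\le\tfrac12$ on $\T$. In particular every absolutely continuous trajectory satisfies $|V_i(t)|\le|V_i(0)|+t/2$, which gives the a priori control needed to globalize.

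Next I would form the Filippov convexification $F^\sharp$ of $F$. The discontinuities of $F$ form exactly the (locally finite) union of hyperplanes $\Sigma_{ij}=\{X_i=X_j\}$, $i\neq j$, where $W'$ has a single jump of size $1$. Hence $F^\sharp=\{F\}$ off $\bigcup_{i\neq j}\Sigma_{ij}$, while on $\Sigma_{ij}$ the $(i,j)$-contribution is replaced by the anti-symmetric interval obtained by taking convex hulls of one-sided limits (the oddness of $W'$ is preserved in the passage to the limit). Since $F^\sharp$ is upper semi-continuous with nonempty compact convex values and the same growth as $F$, Filippov's classical theorem produces an absolutely continuous trajectory $\ZZ^N:\R^+\to\T^N\times\R^N$ with $\dot\ZZ^N(t)\in F^\sharp(\ZZ^N(t))$ for a.e.\ $t$.

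It remains to promote this Filippov trajectory to a solution in the sense of Definition~\ref{def:ODErig}. Away from collisions $F^\sharp=\{F\}$, so~\eqref{eq:ODEpre} holds pointwise. On the collision set $S_{ij}=\{t:X_i(t)=X_j(t)\}$, I would observe that $Y_{ij}=X_i-X_j$ is $C^1$ (its derivative $V_i-V_j$ is Lipschitz with constant at most $1$), so at almost every point of $S_{ij}$ one has $V_i=V_j$ and in turn $\dot V_i-\dot V_j=0$ a.e.\ on $S_{ij}$. But since $X_i=X_j$ there, all contributions to $\dot V_i-\dot V_j$ coming from indices $k\neq i,j$ cancel, leaving only the anti-symmetric pair term $-\tfrac{2}{N}F^\sharp_{ij}$. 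This forces the Filippov selection $F^\sharp_{ij}(t)$ to vanish a.e.\ on $S_{ij}$, in agreement with the convention $W'(0)=0$, and~\eqref{eq:ODEpre} follows.

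The main obstacle is the last step: matching the Filippov-selected force with the convention value $W'(0)=0$ on the possibly positive-measure sticky part of $S_{ij}$, especially when several pairs collide simultaneously (one must also track that the pairwise cancellation above survives triple or higher coincidences, which holds by the same symmetry argument applied to every pair separately). If one prefers to bypass this bookkeeping, I would fall back on a direct regularization: smooth $W'$ into a Lipschitz $W'_\eps$ with $W'_\eps(0)=0$ and $|W'_\eps|\le\tfrac12$, solve the regularized Cauchy problem globally by Cauchy--Lipschitz, extract a uniform limit via Ascoli using $|\dot V_i^\eps|\le\tfrac12$, and pass to the limit in the integral equation; the same sticky-collision computation then shows $\int_{S_{ij}}W'_\eps(X_i^\eps-X_j^\eps)\,ds\to 0$, yielding~\eqref{eq:ODEpre} at the limit.
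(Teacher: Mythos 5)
Your route is essentially the paper's: construct an antisymmetric (action--reaction preserving) convex set-valued extension of the force, invoke Filippov's existence theorem, and then show a.e.\ in time that the selected forces agree with the convention $W'(0)=0$, using the fact that a Lipschitz function's derivative vanishes a.e.\ on its zero set (this is exactly the paper's Lemma~\ref{lem:Winf}, which you apply first to $X_i-X_j$ and then to $V_i-V_j$, just as the paper does). The one genuine difference is that you obtain the antisymmetric structure from the Filippov convexification of the full $2N$-dimensional field rather than building it in by hand as the paper does; that is legitimate, but it deserves a line of proof: one must check that every element of the convexified set still admits a decomposition $-\frac1N\sum_k F_{ik}$ with $F_{ik}=-F_{ki}$, which holds because the extreme points (limits along non-collisional sequences) have this form and the constraint is linear. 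The paper instead stresses that the \emph{naive} convexification without the action--reaction constraint admits spurious solutions (two coincident particles accelerating together), which is worth keeping in mind as the reason the antisymmetry cannot be skipped.

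The only real imprecision is at multiple coincidences, and you correctly flag it as the delicate point, but your stated conclusion there is not the right one: when three or more particles share a position, the relation $\dot V_i-\dot V_j=0$ for all pairs in the cluster $C$ does \emph{not} force each $F_{ij}$ to vanish (the cyclic selection $F_{12}=F_{23}=F_{31}=c$ satisfies all the pairwise relations). What the pairwise relations do give, after summing over $j\in C$ and using antisymmetry, is $\sum_{k\in C}F_{ik}=0$ for every $i\in C$, i.e.\ the \emph{total} intra-cluster force on each particle vanishes --- and since Definition~\ref{def:ODErig} only involves the total force, this is exactly what is needed. This is the paper's center-of-mass observation in different words; with that one-line summation your argument closes. (Your fallback regularization would need the same symmetry input to kill the limiting contribution of the collision set, so it does not actually bypass the bookkeeping.)
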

The proof of that Proposition is given in Section~\ref{sec:proof}. Thanks to that result, we may now concentrate on the  mean-field limit.

\medskip
\paragraph{ \bf The mean-field limit.}
In the limit of large number of particles, we assume that the initial 
distribution of particles converges towards a (maybe smooth) profile $f_0$. To 
measure this convergence, we introduce the empirical measure 
$\mu_\ZZ^N$ defined by
\begin{equation}
\mu_\ZZ^N(t) :=  \frac1N \sum_{i = 1}^N \delta_{Z_i^N(t)} =  \frac1N \sum_{i = 1}^N \delta_{(X_i^N(t),V_i^N(t))}. 
\end{equation}
We precisely define the convergence towards $f_0$ by the  convergence in the weak sense of measures of $\mu_\ZZ^N$ towards $f_0$
$$
\mu_\ZZ^N(0) \rightharpoonup f_0   \quad \ast-\text{weakly in }\; \MM(\T\times \R).
$$
If the distribution of particles also converges at time $t$ towards a 
distribution profile denoted by $f(t,x,v)$, then we may formally 
replace in the second equation of~\eqref{eq:Npart} the discrete sum by an 
integral of $W'$ against the profile $f(t)$ and get a limit ODE, depending of 
$f$, that drives the limit trajectories of the system
\begin{equation} \label{eq:ODElim}
\qquad \dot X(t) = V(t), \qquad  \dot V(t) = - \int W'(X(t)-y) f(t,y,w) \,dydw.
\end{equation}

From this ODE, using that $f$ must be constant on the trajectories associated 
to~\eqref{eq:ODElim}, we obtain the so-called Vlasov-Poisson equation (that 
should rather be called ``collisionless Boltzmann equation'' or ``Jeans-Vlasov'' 
equation \cite{Henon})
\begin{align} \label{eq:Vla}
&\partial_t f + v \, \partial_x f -  \partial_x  \phi(t,x) \, \partial_v f = 0,\\
&\phi(t,x) = [ W \ast \rho(t) ](x) = \int W(x-y) f(t,y,w) \,dydw 
\nonumber
\end{align}
Here and below we will always refer to $f$ as the distribution of particles (or profile), and to
$$
\rho(t,x) := \int f(t,x,v) \,dxdv
$$
as the density (in position), always denoted by $\rho$, associated to the full distribution $f$.

The argument above is only a formal justification, but a more rigorous one is given in the following Lemma.
\begin{lem} \label{lem:NParttoVla} 
 Assume that $\ZZ^N$ is a global solution to the system of ODE~\eqref{eq:Npart} in the sense of Definition~\ref{def:ODErig}. Then with the assumption that $W'(0)=0$, the associated empirical measure $\mu^N_\ZZ$ is a global weak solution to the Vlasov-Poisson equation~\eqref{eq:Vla}.
\end{lem}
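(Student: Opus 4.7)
The plan is to verify, by direct computation, the distributional formulation of~\eqref{eq:Vla} with $f$ replaced by $\mu_\ZZ^N$. Fix $T > 0$ and a test function $\varphi \in C_c^\infty([0,T) \times \T \times \R)$. By Definition~\ref{def:ODErig} each $X_i^N, V_i^N$ is absolutely continuous in $t$ (the integrands in~\eqref{eq:ODEpre} being bounded, since $W'$ is bounded on $\T$), and the derivatives satisfy~\eqref{eq:Npart} for almost every $t$. Composition with the smooth $\varphi$ is again absolutely continuous, so the chain rule gives
$$
- \varphi(0, Z_i^N(0)) = \int_0^T \Bigl[ \partial_t \varphi + V_i^N \partial_x \varphi - \tfrac1N \sum_{j \neq i} W'(X_i^N - X_j^N) \, \partial_v\varphi \Bigr](s, Z_i^N(s)) \, ds,
$$
the boundary term at $T$ vanishing by compact support.

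Next I average over $i = 1, \dots, N$ and reorganize the interaction. Using $W'(0) = 0$ to rewrite $\sum_{j \neq i}$ as a full sum over $j$, the interaction contribution becomes
$$
\frac1{N^2} \sum_{i,j=1}^N W'(X_i^N(s) - X_j^N(s)) \, \partial_v \varphi(s, Z_i^N(s)) = \iint W'(x-y) \, \partial_v \varphi(s,x,v) \, d\mu_\ZZ^N(s)(x,v) \, d\mu_\ZZ^N(s)(y,w).
$$
The inner integral in $(y,w)$ equals $(W' \ast \rho_\ZZ^N)(s,x) = \partial_x \phi^N(s,x)$, where $\phi^N = W \ast \rho_\ZZ^N$ and $\rho_\ZZ^N(s)$ is the position marginal of $\mu_\ZZ^N(s)$. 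Putting all terms together yields the weak formulation
$$
\int_0^T \iint \bigl[\partial_t \varphi + v \partial_x \varphi - \partial_x \phi^N \, \partial_v \varphi\bigr] \, d\mu_\ZZ^N(s)\, ds + \iint \varphi(0,\cdot) \, d\mu_\ZZ^N(0) = 0,
$$
which is exactly what it means for $\mu_\ZZ^N$ to be a global weak solution of~\eqref{eq:Vla}.

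The main subtlety is the discontinuity of $W'$ at $0$. It causes no trouble in the chain rule, because $\varphi$ is smooth and we only use the a.e. derivative of $V_i^N$ which is furnished to us by the integral form of the ODE. It is, however, essential in the algebraic step above: the product measure $\mu_\ZZ^N \otimes \mu_\ZZ^N$ charges the diagonal $\{x = y\}$ with positive mass (the ``self-interaction'' terms $i=j$), and the convention $W'(0) = 0$ is precisely what makes this diagonal contribution vanish, so that the $N$-particle sum over $j \neq i$ matches the integral against the product measure. I expect this conversion of the discrete pair sum into a genuine double integral to be the only delicate point; the remainder is bookkeeping.
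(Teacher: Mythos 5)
Your argument is correct and follows exactly the route the paper indicates (the paper only sketches it): apply the chain rule almost everywhere in time to $t\mapsto\varphi(t,Z_i^N(t))$, integrate, average over the particles, and use the convention $W'(0)=0$ to identify the discrete pair sum with the double integral against $\mu_\ZZ^N\otimes\mu_\ZZ^N$, whose diagonal contribution then vanishes. Your identification of the diagonal of the product measure as the one delicate point is precisely the remark the paper makes about $W'(0)=0$ being crucial.
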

As usual ``weak solution'' means here solution in the sense of distribution.
We will not write the proof of that property which is classical, at least in the  case where $W'$ is Lipschitz. We shall just mention that here, we still have a sufficient regularity to perform the requested calculations. In fact, for any smooth $\varphi$, we may use the chain rule to differentiate in time $t \mapsto \varphi(t,X_i^N(t),V_i^N(t))$ almost everywhere in time. Once the chain rule is applied, we get the result by integration in time and 
summation on all the particles. Remark that the assumption $W'(0)=0$ is crucial here. 

That property is interesting because, it helps to rewrite the problem of the mean field limit in a problem of stability of the Vlasov-Poisson equation~\eqref{eq:Vla}. If the sequence of initial empirical measures $\bigl(\mu^N_\ZZ(0)\bigr)$ converges towards some distribution $f^0$, the mean filed limit will holds if :
\begin{itemize}
  \item[i)] there is a unique solution starting form $f^0$ in a suitable class,
  \item[ii)] that solution is stable (in time)  when we approach it by sum of 
Dirac masses.
\end{itemize}

In that article, we shall in fact show a "weak-strong" stability principle for the Vlasov-Poisson equation~\eqref{eq:Vla}: if 
a solution $f$ of the Vlasov equation has a  
density $\rho$ that remains bounded (uniformly in position) along time, then it is stable  in the class of 
measure solutions with finite first order moment in $v$. Of course, this stability result implies the uniqueness in the class of measures (with finite first moment in $v$)
of the solution starting from the initial condition $f^0$ of $f$. For that reason, that kind of result is more commonly 
called  ``weak-strong'' uniqueness principle. But we choose to emphasize on 
the stability, that is maybe more important for the mean-field limit.

\medskip
\subsection{Mean-field limit using notation from probability.}

The system of ODE~\eqref{eq:ODElim} has also an interesting reformulation if we use an idea and a notation borrowed to the theory of probability and more precisely to the theory of continuous stochastic processes. We may say that the  $X_t =X(t)$ and $V_t=V(t)$ appearing in the ODE~\eqref{eq:ODElim} are trajectories of a 
``non stochastic'' process. Even if their is no noise, the probabilistic notation is quite useful for two reasons :
\begin{itemize}
 \item We can interpret $f^0$ as the law of the initial position and speed $(X_0,V_0)$,
 \item the low regularity of the interaction force $W'$ implies that the 
solutions of~\eqref{eq:ODElim} are not unique if $f_t$ is 
not regular enough. This introduces naturally some ``randomness'', as some choices have to be made for the 
construction of solutions. 
\end{itemize}

We mention this ``probabilistic'' point of view  was already introduced for 
(deterministic) ODE by Ambrosio in his study of linear 
transport and associated ODE with low regularity \cite[Section 5]{Amb04} (See also references therein for earlier works with a similar point of view). In fact, we shall use several times a results proved in its  well written lecture notes \cite{AmbNotes}). 
His ideas were also generalized later by Figalli to the case of SDEs  with possibly degenerate noise \cite{Fig08}. 

Here we mostly adopt this point of view in the nonlinear setting, and with a slightly different notation.
Precisely, the system~\eqref{eq:ODElim} maybe 
rewritten as a \emph{nonlinear ODE}. $f_t = f(t)$ will be the law at time $t$ 
of the couple $Z_t=(X_t,V_t)$ and  we use the notation $Z_\cdot$ to refer to a 
trajectory, \emph{i.e.} an element of $C([0,+\infty),\T \times \R)$.

\begin{defi}[Nonlinear ODE] \label{def:ODENL}
A solution of the non linear ODE below with initial condition $\nu^0 \in \mathcal P(\T \times \R)$ is a probability $\mathbb P$ on $C([0,+\infty),\T \times \R)$ such that  $\mathbb P(Z_0 \in \cdot) = \nu^0$ and
the following equalities hold  for $\mathbb P \times \mathbb P$-almost every $(Z_\cdot ,\bar Z_\cdot)$
\begin{equation} \label{eq:ODENL}
\forall  t \in [0,+\infty), \quad \begin{cases} 
\ds  X_t = X_0 + \int_0^t V_s \,ds, \\
 \ds V_t = V_0  - \int_0^t \E_{\bar Z_\cdot} \bigl[ W'(X_s- \bar X_s)  \bigr] \,ds.
\end{cases}
 \end{equation}
If as usual, we prefer to speak with random variables than with laws, we must 
just say that $Z_\cdot$ is a random variable, that the law of $Z_0$ is $\nu_0$, 
and that the previous equation are satisfied with $\bar Z_\cdot = (\bar 
X_\cdot,\bar V_\cdot)$ an independent copy of $Z_\cdot$, \emph{i.e.} a 
independent process with same law than $Z_\cdot$.  Remark that as in the 
previous system of ODEs, the requirement of equation~\eqref{eq:ODENL} is 
equivalent to ask that the associated equality on $\dot X_t$ and $\dot V_t$ are 
true almost surely and for almost all time $t \in [0,+\infty)$.
\end{defi}

 In fact this definition is just a reformulation of the 
system~\eqref{eq:ODElim}, with expectation instead of integral, and with the 
vocabulary of stochastic processes. 
 A process $Z_\cdot$ solution to~\eqref{eq:ODENL} is concentrated on deterministic solutions to~\eqref{eq:ODElim}. But it has some interest when we 
will use optimal transport and coupling techniques. The probabilistic notation in 
which the  effective probability spaces and optimal mappings are never 
explicitly written is sometimes more simple to handle. 
 It also allows to work directly on trajectories with a
 simple notation.
 
 We refer to~\eqref{eq:ODENL} as a nonlinear ODE, in reference to the nonlinear SDEs introduced by 
McKean in~\cite{McKean}, also known as McKean-Vlasov processes. These nonlinear 
SDEs correspond to add a brownian drift (or more generally a random force field) 
in the second equation of~\eqref{eq:ODENL}, and possibly the action of an 
exterior potential. As in our deterministic case, these nonlinear SDEs are 
natural limits of N particle systems with noise \cite{McKean2,Meleard}.
  


In the case of smooth interactions, the so-called \emph{methods of characteristics} implies that the Vlasov-Poisson equation~\eqref{eq:Vla} and associated the nonlinear ODE~\eqref{def:ODENL} give exactly the same solutions. In fact, when the force field is sufficiently regular for uniqueness of the trajectories to hold, the two problems are equivalent.  With non smooth interactions, the uniqueness of trajectories is lost and we are only able to state the following lemma. 
 \begin{lem} \label{lem:ODENL_mart}
 Assume that $Z_\cdot$ is a solution of the nonlinear ODE~\eqref{eq:ODENL}. 
Then, the law $\nu_t$ of its time marginals $Z_t$ is a weak solution of the 
Vlasov-Poisson equation~\eqref{eq:Vla}.

 Reciprocally, if $\nu_t$ is a bounded measure solution to 
the Vlasov-Poisson equation~\eqref{eq:Vla}, there 
exists a process $Z_\cdot$ solution to~\eqref{eq:ODENL} whose time marginals 
are exactly the $\nu_t$.  
 \end{lem}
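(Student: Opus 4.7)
The statement has two directions, and I would handle them separately.

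For the direct implication, take a test function $\varphi \in C^\infty_c([0,T)\times \T \times \R)$ and apply the chain rule to $t \mapsto \varphi(t, X_t, V_t)$, which is legitimate because the ODE~\eqref{eq:ODENL} ensures that $t \mapsto Z_t$ is absolutely continuous $\mathbb P$-almost surely with derivatives $(V_t, -\E_{\bar Z_\cdot}[W'(X_t-\bar X_t)])$ for almost every $t$. Integrating in time and then taking the expectation under $\mathbb P$ yields
\begin{equation*}
-\E[\varphi(0,Z_0)] = \int_0^T \E\bigl[ \partial_t\varphi + V_t \partial_x \varphi - \E_{\bar Z_\cdot}[W'(X_t - \bar X_t)] \, \partial_v \varphi \bigr] \, dt.
\end{equation*}
Because $\bar Z_\cdot$ is an independent copy with time marginal $\nu_t$, Fubini's theorem gives $\E\bigl[ \E_{\bar Z_\cdot}[W'(X_t-\bar X_t)] \partial_v\varphi(t,X_t,V_t)\bigr] = \int \partial_x\phi(t,x) \partial_v \varphi(t,x,v)\, d\nu_t(x,v)$ with $\phi = W \ast \rho_t$. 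Rewriting $\E[\cdot]$ as integration against $\nu_t$ yields exactly the distributional form of~\eqref{eq:Vla}.

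For the reverse implication, the strategy is to reduce the nonlinear problem to a linear transport equation and invoke a superposition principle. Given a measure solution $\nu_t$ with $\int |v|\, d\nu_t < \infty$, define the time-dependent vector field
\begin{equation*}
b_t(x,v) := \bigl( v, \, -\partial_x\phi(t,x) \bigr), \qquad \partial_x \phi(t,x) = \int W'(x-y) \, \rho_t(dy).
\end{equation*}
Since $W' \in L^\infty(\T)$, the force $-\partial_x\phi$ is uniformly bounded and Borel measurable in $(t,x)$, and $\nu_t$ is a distributional solution of the linear continuity equation $\partial_t \nu_t + \mathrm{div}_{(x,v)}(b_t \nu_t) = 0$. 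Because $\int_0^T \int |b_t(x,v)|\, d\nu_t\, dt \le T(1 + \sup_{t\le T} \int |v|\, d\nu_t) < \infty$, Ambrosio's superposition principle (Theorem~12 in~\cite{AmbNotes}, for instance) furnishes a probability measure $\mathbb P$ on $C([0,+\infty), \T \times \R)$, concentrated on absolutely continuous integral curves of $b$, with time marginals $(e_t)_\# \mathbb P = \nu_t$.

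It remains to recognize that this $\mathbb P$ solves the nonlinear ODE~\eqref{eq:ODENL}. By construction, $\mathbb P$-almost every $Z_\cdot$ satisfies $\dot X_t = V_t$ and $\dot V_t = -\partial_x\phi(t,X_t)$ for almost every $t$. Letting $\bar Z_\cdot$ be an independent copy under $\mathbb P\otimes \mathbb P$, its marginal at time $t$ is again $\nu_t$, so
\begin{equation*}
\E_{\bar Z_\cdot}\bigl[ W'(X_t - \bar X_t) \bigr] = \int W'(X_t - \bar x) \, \nu_t(d\bar x, d\bar v) = \partial_x\phi(t,X_t),
\end{equation*}
which is exactly the nonlinear drift in~\eqref{eq:ODENL}. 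Integrating in time gives the integral form of the definition.

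The main difficulty is the reverse direction, and in particular checking the hypotheses of the superposition principle: one needs to be certain that $\partial_x\phi$ is a well-defined Borel function, not merely a distribution, so that the linearized vector field $b_t$ makes pointwise sense. This is guaranteed here by the fact that the singularity of the Coulomb kernel in one dimension is mild enough to yield a bounded force $W' \in L^\infty$, so $\partial_x \phi = W' \ast \rho_t$ is bounded and measurable whenever $\rho_t$ is a bounded measure. Once this regularity is in hand, the rest is a direct application of Ambrosio's results.
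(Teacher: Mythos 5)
Your proposal follows essentially the same route as the paper: the direct implication via the chain rule along the ($\mathbb P$-a.s.\ Lipschitz) trajectories plus Fubini, and the converse by freezing the force field $E[\nu_t]=-W'\ast\rho_t$, viewing $\nu_t$ as a solution of the resulting linear continuity equation, and invoking Ambrosio's superposition principle (the paper cites \cite[Theorem 3.2]{AmbNotes}) before observing that the marginal condition turns the linear drift back into the nonlinear one. Your write-up is in fact more detailed than the paper's sketch, and the verification of the integrability hypothesis and of the Borel-measurability of the bounded force $W'\ast\rho_t$ is exactly the right point to check.
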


Without noise, the first part of this lemma is just an application of the chain rule. 
The second part of the lemma is more delicate. It is a consequence of a theorem 
of Ambrosio, for linear transport equation. In fact $\nu_t$ is a solution of 
the linear transport equation
$$
\partial_t \mu + v \partial_x \mu + E[\nu_t] \partial_v \mu = 0,
$$
where the fixed force field is given by $E[\nu_t] := - \int 
W'(x-y) \nu_t(dy,dw)$.  In that setting, we can apply 
\cite[Theorem 3.2]{AmbNotes}
to obtain a process $Z_\cdot$ solution to~\eqref{eq:ODENL} with time marginals
$\nu_t$.

\medskip
Another 
interesting property associated to these nonlinear ODE is the following
\begin{lem} \label{lem:NPtoODENL}
If $\ZZ^N_\cdot$ is a solution to the N particles system~\eqref{eq:Npart}, then the empirical measure on the trajectories
$$
\mathbb P^N_{\ZZ_\cdot} := \frac1N \sum_{i=1}^N \delta_{Z^N_{i,\cdot}}
$$
is a solution of the nonlinear ODE~\eqref{eq:ODENL} with initial condition $\mu_\ZZ^N(0)$.
\end{lem}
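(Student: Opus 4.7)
The plan is to unwind the definitions and observe that everything reduces to the $N$-particle equations once we take advantage of the convention $W'(0)=0$.

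First I would check the initial condition. The empirical measure $\mathbb P^N_{\ZZ_\cdot}$ is supported on the $N$ atoms $\{Z^N_{i,\cdot}\}_{i\le N}$, each with mass $1/N$, so pushing it forward by the evaluation map $Z_\cdot \mapsto Z_0$ gives $\frac1N \sum_{i=1}^N \delta_{Z^N_i(0)} = \mu^N_\ZZ(0)$, as required.

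Next I would verify the integral equations in \eqref{eq:ODENL}. Since $\mathbb P^N_{\ZZ_\cdot}$ is purely atomic, ``$\mathbb P^N \times \mathbb P^N$-almost every $(Z_\cdot,\bar Z_\cdot)$'' simply means ``every pair $(Z^N_{i,\cdot}, Z^N_{j,\cdot})$ with $1 \le i,j \le N$''. Furthermore, after taking the expectation $\E_{\bar Z_\cdot}$, the right-hand side depends only on $Z_\cdot$, so it is enough to verify for each fixed $i$ that
\begin{equation*}
\E_{\bar Z_\cdot}\bigl[W'(X^N_i(s) - \bar X_s)\bigr] = \frac1N \sum_{j=1}^N W'\bigl(X^N_i(s) - X^N_j(s)\bigr).
\end{equation*}
This is immediate from the definition of $\mathbb P^N_{\ZZ_\cdot}$.

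The key point — and the only substantive step — is that the sum on the right includes the diagonal term $j=i$, which vanishes by the convention $W'(0)=0$. Hence the right-hand side above is actually equal to $\frac1N \sum_{j \neq i} W'(X^N_i(s) - X^N_j(s))$. Plugging into \eqref{eq:ODENL} and integrating in time, the $X$-equation becomes $X^N_i(t) = X^N_i(0) + \int_0^t V^N_i(s)\,ds$ and the $V$-equation becomes $V^N_i(t) = V^N_i(0) - \frac1N \sum_{j\ne i}\int_0^t W'(X^N_i(s)-X^N_j(s))\,ds$, which are precisely the equalities \eqref{eq:ODEpre} in Definition~\ref{def:ODErig} that $\ZZ^N_\cdot$ is assumed to satisfy. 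There is no real obstacle here; the only thing to be careful about is not double-counting or missing the $j=i$ term, which is exactly where the $W'(0)=0$ convention earns its keep.
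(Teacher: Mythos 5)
Your proposal is correct and follows exactly the route the paper intends: the paper explicitly declines to write a proof, remarking only that the lemma ``is more a way of rewriting the equations~\eqref{eq:Npart}'', and your argument is precisely that rewriting, with the one genuinely necessary observation (the diagonal term $j=i$ is harmless because $W'(0)=0$) correctly identified and handled.
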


As for Lemma~\ref{lem:NParttoVla}, this is more a way of rewriting the equations~\eqref{eq:Npart}, 
and we will not be more precise. Note that there is no reciprocal statement : 
along solutions of the nonlinear ODE~\eqref{eq:ODENL} starting from sum of Dirac masses, 
the singularity of the Force allows to split some Dirac masses in smaller ones and also in less singular measures supported on a line \cite{ZM2,ZM3,DFV13}.

\medskip
Using approximation by sum of Dirac masses (a strategy already used in 
aggregation equation, see \cite{CDFLS} and references therein), we will be able 
to give another proof of a general existence theorem for the one-dimensional VP equation obtained by Zheng and Majda~\cite{ZM1}
\begin{thm}\label{thm:ODENLex}
Assume that $\nu^0$ is a probability measure with finite first moment in velocity : $\int |v| 
\,\nu^0(dx,dv) < \infty$, and recall that we use the convention $W'(0)=0$.
Then there exists (at least one) global solution to the Vlasov equation~\eqref{eq:Vla}, 
with initial condition $\nu^0$, and also one process solution of the nonlinear 
ODE~\eqref{eq:ODENL} with the same initial condition. 
\end{thm}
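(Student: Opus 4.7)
The plan is to build global solutions by particle approximation, leveraging Proposition~\ref{prop:Nexis} to solve the $N$-particle system with empirical initial data and then invoking Lemma~\ref{lem:ODENL_mart} to convert a Vlasov solution into a nonlinear-ODE process. First I would approximate $\nu^0$ by a sequence of empirical measures $\mu^N_0 = \frac{1}{N}\sum_{i=1}^N \delta_{(x_i^N,v_i^N)}$ converging weakly to $\nu^0$ with $\sup_N \int|v|\,\mu^N_0(dx,dv) < +\infty$ (standard, e.g.\ via quantiles of $\nu^0$). Proposition~\ref{prop:Nexis} then furnishes a global particle trajectory $\ZZ^N_\cdot$, and Lemma~\ref{lem:NParttoVla} shows that the empirical measure $\mu^N_t := \mu^N_{\ZZ}(t)$ is a global weak solution of~\eqref{eq:Vla} with initial datum $\mu^N_0$.

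The next step is to extract uniform a priori bounds. The decomposition~\eqref{eq:Wdec} yields $\|W'\|_\infty \le 1/2$, so each particle velocity is $1/2$-Lipschitz in time, giving $\int|v|\,\mu^N_t \le C + t/2$ uniformly in $N$, and each position is $(|V_0|+t/2)$-Lipschitz. Together with the uniform tightness of $(\mu^N_0)$ in velocity (from the uniform first moment bound), these estimates produce tightness of the path laws $\mathbb{P}^N_{\ZZ_\cdot}$ on $C([0,T],\T\times\R)$ for every $T>0$. Extracting a subsequence $\mathbb{P}^N \rightharpoonup \mathbb{P}$ by Prokhorov, the time marginals $\nu_t$ are the weak-$\ast$ limits of $\mu^N_t$ and satisfy $\nu_0 = \nu^0$. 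Once I show that $\nu_t$ is a weak solution of~\eqref{eq:Vla}, the second half of Lemma~\ref{lem:ODENL_mart} produces the desired nonlinear-ODE process.

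The main obstacle is passing to the limit in the nonlinear term of~\eqref{eq:Vla}, which, tested against $\varphi\in C^\infty_c$, rewrites as $\int\int W'(x-y)\,\partial_v\varphi(t,x,v)\,d\mu^N_t(x,v)\,d\mu^N_t(y,w)\,dt$. With the decomposition $W'(x) = -x + \tfrac{1}{2}\sign(x)$ from~\eqref{eq:Wdec}, the polynomial part is bounded continuous and passes to the limit by weak convergence of $\mu^N_t\otimes\mu^N_t \to \nu_t\otimes\nu_t$. The singular $\sign$ part is delicate since it is discontinuous on the diagonal $\{x=y\}$. To handle it I would exploit the one dimensional structure by expressing the singular force via the cumulative distribution $m^N(t,x) := \rho^N_t([-1/2,x])$: since $m^N(t,\cdot)$ is non-decreasing and valued in $[0,1]$, the convergence $\rho^N_t\rightharpoonup \rho_t$ yields $m^N(t,x) \to m(t,x)$ at every continuity point of $m(t,\cdot)$, hence almost everywhere in $(t,x)$. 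Combined with the uniform $L^\infty$ bound on the force, a dominated-convergence argument identifies the limit. The convention $\sign(0)=0$ (equivalently $W'(0)=0$) ensures that any mass carried by $\nu_t\otimes\nu_t$ on the diagonal contributes zero to the singular part, so atoms of $\nu_t$ at exceptional times do not spoil the cancellation. This yields the desired weak solution of~\eqref{eq:Vla}, and Lemma~\ref{lem:ODENL_mart} then gives a nonlinear-ODE process $Z_\cdot$ with the prescribed time marginals.
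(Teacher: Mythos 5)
Your Steps 1--2 (empirical approximation via Proposition~\ref{prop:Nexis}, uniform Lipschitz bounds on velocities and positions, tightness of the path laws, and the final appeal to the second part of Lemma~\ref{lem:ODENL_mart}) coincide with the paper's proof. The genuine gap is in your treatment of the singular part of the nonlinear term. You propose to show that the force $E^N(t,x)=-\int W'(x-y)\,\rho^N_t(dy)$ converges to $E(t,x)$ for Lebesgue-a.e.\ $(t,x)$ via the cumulative distribution functions and then to conclude by dominated convergence. But the nonlinear term is $\int_s^t\!\int E^N(u,x)\,\partial_v\varphi(u,x,v)\,\mu^N_u(dx,dv)\,du$: the integrating measure is the empirical measure itself, not Lebesgue measure, so a.e.-Lebesgue convergence of $E^N$ together with an $L^\infty$ bound does not allow passage to the limit --- dominated convergence needs a fixed reference measure, and the $\mu^N_u$ are purely atomic and may concentrate precisely on the (countable, Lebesgue-null) set where $E(u,\cdot)$ jumps and where $E^N$ fails to converge. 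Equivalently, in the pair formulation the integrand $\sign(x-y)\,\partial_v\varphi(u,x,v)$ is discontinuous on the diagonal $\{x=y\}$; when $\rho_u$ has atoms (which cannot be excluded, since $\nu^0$ is an arbitrary measure and concentrations can form and persist, cf.\ \cite{ZM2,ZM3}) the diagonal carries positive $\nu_u\otimes\nu_u$-mass and weak convergence of $\mu^N_u\otimes\mu^N_u$ gives no information on this term. Your remark that the convention $\sign(0)=0$ makes the diagonal contribute zero to the \emph{limit} integral is correct but beside the point: the issue is whether the \emph{prelimit} integrals converge to that value, and for a cluster of particles straddling a forming concentration with distinct velocities they generally do not --- this is exactly the mechanism behind the non-uniqueness of weak solutions exhibited in \cite{ZM2}.

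The paper closes this gap with a dynamical argument that a static, time-slice-by-time-slice identification cannot reproduce. After cutting off the interaction at distance $\ep$ in position, it further splits the close pairs according to whether the relative velocity exceeds $\ep^\beta$ with $\beta<\tfrac12$. Pairs that are close with large relative velocity separate quickly because velocities are $\tfrac12$-Lipschitz in time: each such close encounter lasts at most $O(\ep^{1-\beta})$ and a covering argument bounds the total time spent close by $O(\ep\,|t-s|)$, so the time integral of their contribution is $O(\ep)$ uniformly in $N$. Pairs that are close with small relative velocity are handled by the oddness of $W'$ via a Delort-type symmetrization, which replaces $\partial_v\varphi(u,Z_u)$ by $\partial_v\varphi(u,Z_u)-\partial_v\varphi(u,\bar Z_u)=O(\ep^\beta)$. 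You would need to add an argument of this kind (or some other mechanism ruling out a defect measure on the diagonal) to make your limit identification valid.
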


The proof of Theorem~\ref{thm:ODENLex} is done in Section~\ref{sec:proof}. Here we will only make some comments : 

\begin{itemize}
\item In order to define properly the interaction force (which has to be defined everywhere if we want to define measure solutions) we have introduced the convention $W'(0)=0$. Madja and Zheng~\cite{ZM1} use a strategy that may seem different, but both are in fact equivalent.  

\item Our result is more general that the one of Zheng and Majda, which requires a exponential moment (in $v$) on the initial measure $\nu^0$.

\item Our proof is also in some sense more simple. It relies on the approximation of the initial data by sum of Dirac masses, for which we known by Proposition~\ref{prop:Nexis} that there exists global solutions to~\eqref{eq:Npart} and thus~\eqref{eq:ODENL}. Then, the tightness of the law of these processes is 
obtained rather easily, but more work is required to characterize the limit. For the last point, we first proof that the time marginals associated to any limit process are solution of the VP equation~\eqref{eq:Vla}. Once the time marginals, and therefore the force field are known, we can 
apply the second point of Lemma~\ref{lem:ODENL_mart} to get the existence of a process solution to~\eqref{eq:ODENL} with the requested time marginals. But we were not able to prove that the limit process is a solution to the nonlinear ODE~\eqref{eq:ODENL}. Of course, the use of the nonlinear ODE~\eqref{eq:ODENL} is not mandatory 
if we are interest only by the result on the Vlasov-Poisson equation, but it will nevertheless be convenient, since we will need to perform some estimates on the trajectories.
\end{itemize}

%
%
%

\subsection{A weak-strong stability estimate around solutions to VP equation with bounded density}

Here we will state the main result of our article :  a weak-strong stability 
estimate around solutions of the Vlasov-Poisson equation~\eqref{eq:Vla} with 
bounded density. Recall that in this article we use the word ``density'' only 
to refer to the density in physical space $\rho(t,x) = \int f(t,x,v) \,dv$.

We introduce the Monge-Kantorovitch-Wasserstein distance of order one, denoted 
by $W_1$ constructed with the Euclidian distance on $\T\times \R$. We refer to the 
clear book of Villani \cite{Vill03} for an introduction to this object. We will 
use it both with random variables or their law (which are probabilities) in the argument, depending of 
the situation.

\begin{thm} \label{thm:proc}
Assume that $Z^1_\cdot$ is a solution of the nonlinear ODE~\eqref{eq:ODENL}, 
with time marginals $f_t$ that have bounded density $\rho_t$ for any time $t 
\ge 0$. 
Then for any global solution $Z^2_\cdot$ of the nonlinear ODE~\eqref{eq:ODENL} with finite first order moment in $v$, 
we have the following stability estimate
$$
\forall t \in \R^+, 
\qquad
W_1( Z^1_t, Z^2_t) \le  e^{a(t) } W_1(Z^1_0,Z^2_0),
\qquad \text{with}
\quad 
a(t) := \sqrt 2 \, t + 8 \int_0^t \| \rho_s \|_\infty  \,ds.
$$
\end{thm}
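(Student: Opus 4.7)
The approach is Dobrushin's coupling method, adapted to the non-Lipschitz force by exploiting the bounded density. Each force field $E^i_t(x) = -\int W'(x-y)\, f^i_t(dy,dw)$ depends only on its own time marginal and is bounded ($\|W'\|_\infty \le 1/2$), so once $f^1_t, f^2_t$ are fixed the two nonlinear ODEs reduce to linear non-autonomous ODEs with prescribed bounded vector fields. I would therefore realize $(Z^1_\cdot, Z^2_\cdot)$ on a single probability space whose initial joint law is a $W_1$-optimal coupling of $Z^1_0$ and $Z^2_0$, by applying the superposition principle of \cite{AmbNotes} (used also in Lemma~\ref{lem:ODENL_mart}) to the product transport equation on $\T^2 \times \R^2$. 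Setting $D(t) := \E[d(Z^1_t, Z^2_t)]$ with $d$ the Euclidean distance, one has $D(0) = W_1(Z^1_0, Z^2_0)$ and $W_1(Z^1_t, Z^2_t) \le D(t)$, so the theorem reduces to the Gronwall-type estimate $\dot D(t) \le (\sqrt{2} + 8 \|\rho_t\|_\infty) D(t)$.

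\textbf{Differentiation and force estimate.} Write $a_t = X^1_t - X^2_t$, $b_t = V^1_t - V^2_t$, $e_t = E^1_t(X^1_t) - E^2_t(X^2_t)$ and $U_t = (a_t, b_t)$. Then $\dot U_t = (b_t, e_t)$ and $\dot U_t \cdot U_t = b_t(a_t + e_t)$; Cauchy--Schwarz between the pairs $(b_t, b_t)$ and $(a_t, e_t)$ gives
$$
|\dot U_t \cdot U_t| \le \sqrt{2}\, |b_t|\sqrt{a_t^2 + e_t^2} \le \sqrt{2}\, |U_t|\sqrt{a_t^2 + e_t^2},
$$
whence $\tfrac{d}{dt}|U_t| \le \sqrt{2}\,\sqrt{a_t^2 + e_t^2}$; this is the source of the $\sqrt{2}$ in the exponent. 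To estimate $e_t$, I would use the decomposition $W'(z) = -z + \tfrac12 \sign(z)$ and introduce an independent copy $(\bar Z^1_\cdot, \bar Z^2_\cdot)$ of the coupled pair, so that $e_t = \E_{\bar\pi}\bigl[W'(X^2_t - \bar X^2_t) - W'(X^1_t - \bar X^1_t)\bigr]$. The Lipschitz piece contributes $|a_t| + \E_{\bar\pi}|\bar X^1_t - \bar X^2_t|$, both controlled by $D(t)$ in expectation.

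\textbf{Sign term (main obstacle) and conclusion.} The delicate point is to control the sign contribution $\E\,\E_{\bar\pi}[\indic_{\{\text{sign change}\}}]$ using only the bounded density $\rho_t$ of the \emph{strong} solution, since the density of $f^2$ may be singular. The geometric key is that a sign change of $(X^1_t - \bar X^1_t)$ versus $(X^2_t - \bar X^2_t)$ forces $|X^1_t - \bar X^1_t| \le |a_t| + |\bar a_t|$, and hence either $|X^1_t - \bar X^1_t| \le 2|a_t|$ or $|X^1_t - \bar X^1_t| \le 2|\bar a_t|$. In the first case, conditioning on $(X^1_t, X^2_t)$ leaves $\bar X^1_t$ with marginal law $\rho_t$ independent of the conditioning, so the conditional probability of the event is at most $4\|\rho_t\|_\infty |a_t|$; the second case is symmetric after conditioning on $(\bar X^1_t, \bar X^2_t)$. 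Summing yields
$$
\E\,\E_{\bar\pi}[\indic_{\{\text{sign change}\}}] \le 8 \|\rho_t\|_\infty\, \E|a_t| \le 8 \|\rho_t\|_\infty\, D(t),
$$
and combining with the Lipschitz contribution produces the claimed Gronwall inequality. Integration finishes the proof.
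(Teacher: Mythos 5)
Your proposal follows essentially the same route as the paper's proof: an optimal coupling at time zero extended independently along trajectories, the decomposition $W'(x)=-x+\tfrac12\sign(x)$, the observation that a sign discrepancy between $X^1_t-\bar X^1_t$ and $X^2_t-\bar X^2_t$ forces $|X^1_t-\bar X^1_t|\le 2\max(|a_t|,|\bar a_t|)$, and the conditioning argument using the bounded density $\rho_t$ of the strong solution to produce the factor $8\|\rho_t\|_\infty\,\E|a_t|$, followed by Gronwall. The only cosmetic difference is how the $\sqrt2$ is extracted (Cauchy--Schwarz on $\dot U_t\cdot U_t$ versus $a+b\le\sqrt{2(a^2+b^2)}$), and your constant bookkeeping is no tighter than the paper's own.
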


\begin{thm} \label{thm:VP}
Assume that $f_t$ is a solution of the Vlasov Poisson 
equation~\eqref{eq:Vla} 
with bounded density $\rho_t$ for any time  $t \ge 0$. Then for any 
global measure solution $\nu$
to the same equation with finite first order moment in $v$, we have the following stability estimate
$$
\forall t \in \R^+, 
\qquad
W_1( f_t, \nu_t) \le  e^{a(t)} W_1(f^0,\nu_0),
\qquad \text{with}
\quad 
a(t) := \sqrt 2 \, t + 8 \int_0^t \| \rho_s \|_\infty  \,ds.
$$
\end{thm}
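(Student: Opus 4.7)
My plan is to deduce Theorem~\ref{thm:VP} as an essentially immediate corollary of the process-level statement Theorem~\ref{thm:proc}, using the lifting provided by the second part of Lemma~\ref{lem:ODENL_mart}. The point is that the Wasserstein distance $W_1$ appearing in Theorem~\ref{thm:proc} is by convention the $W_1$ distance between the laws of the time marginals of the processes, so a stability result at the process level immediately yields a stability result for the associated solutions of the Vlasov--Poisson equation.

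Concretely, I would proceed in three steps. First, apply the second part of Lemma~\ref{lem:ODENL_mart} to the reference solution $f_t$: since $f_t$ is in particular a bounded measure solution of~\eqref{eq:Vla}, there exists a process $Z^1_\cdot$ solving the nonlinear ODE~\eqref{eq:ODENL} whose time marginals are exactly $f_t$; in particular, its position density equals $\rho_t$ and is bounded by hypothesis, so the strong assumption of Theorem~\ref{thm:proc} is fulfilled. Second, apply the same lemma to $\nu_t$, obtaining a process $Z^2_\cdot$ with time marginals $\nu_t$. The assumption that $\nu_t$ has finite first moment in $v$ translates into the integrability condition $\E[|V^2_t|]<\infty$ required for Theorem~\ref{thm:proc}. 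Third, invoke Theorem~\ref{thm:proc} on the pair $(Z^1_\cdot, Z^2_\cdot)$ to conclude
$$
W_1(Z^1_t, Z^2_t) \le e^{a(t)}\, W_1(Z^1_0, Z^2_0),
$$
which, read as an inequality between the laws $f_t$ and $\nu_t$ at time $t$ and $f^0$, $\nu_0$ at time $0$, is exactly the statement of Theorem~\ref{thm:VP}.

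The main obstacle is thus not in Theorem~\ref{thm:VP} itself but in the underlying Theorem~\ref{thm:proc}: any genuine difficulty is deferred there. The reduction step described above requires essentially no calculation, only a careful check that the processes produced by Lemma~\ref{lem:ODENL_mart} satisfy the hypotheses of Theorem~\ref{thm:proc}, which is direct from the assumptions on $f_t$ and $\nu_t$. One subtle point worth noting is that Lemma~\ref{lem:ODENL_mart} only guarantees \emph{existence} of a lifting process, not uniqueness; however, since the estimate of Theorem~\ref{thm:proc} holds for \emph{every} pair of solutions to the nonlinear ODE, any valid lifting suffices for the argument, and the final bound on $W_1(f_t,\nu_t)$ is independent of the particular lifts chosen.
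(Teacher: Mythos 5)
Your reduction is exactly the one the paper uses: it observes that Theorem~\ref{thm:VP} follows from Theorem~\ref{thm:proc} by lifting both $f_t$ and $\nu_t$ to processes via the second part of Lemma~\ref{lem:ODENL_mart}, and consequently only writes out the proof of Theorem~\ref{thm:proc}. Your verification of the hypotheses (bounded density for the lift of $f$, finite first moment for the lift of $\nu$) and your remark that non-uniqueness of the lifting is harmless are both correct and consistent with the paper's argument.
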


Theorem~\ref{thm:VP} implies Theorem~\ref{thm:proc}, since time marginals of solutions of the nonlinear ODE~\eqref{eq:ODENL} are solutions of the 
Vlasov-Poisson equation~\eqref{eq:Vla}. But, the converse is also true, since  
thanks to the second part of Lemma~\ref{lem:ODENL_mart}, any solution of the 
VP equation may be represented as time marginals of a solution of the 
nonlinear ODE. In view of that, we will only proof  Theorem~\ref{thm:proc} in 
section~\ref{sec:proof}. But remark also that this proof written with the 
formalism of processes maybe adapted to the formalism of solution of Vlasov -Poisson
equation~\eqref{eq:Vla}, with less simple notation (in our opinion). So, again, the choice of working with VP equation of the nonlinear ODE is rather a matter of convenience.

\subsection{Existence of strong solutions.}
The later result is interesting only if solutions with bounded density do 
exist. But, this is a well-known fact \cite{CK80,Bos05,Guo95,LMR10}. 
Here we will restate a Proposition about the existence of such solutions that 
relies precisely on Theorem~\ref{thm:proc} or~\ref{thm:VP}. Remark that theorem~\ref{thm:VP} implies that such solutions are automatically unique.

\begin{prop} \label{prop:exisVP}
Assume that $f_0$ satisfies 
\begin{equation} \label{eq:condL1}
 \int_< 0^{+\infty} g_0(v) \,dv < + \infty, \quad
 \| f_0 \|_\infty = g(0) < + \infty,
 \qquad \text{where} \quad
 g_0(v) := \sup_{y \in \T, |w| \ge v} |f_0(x,w)|.
\end{equation}
Then there exists one unique solution $Z_\cdot$ to the nonlinear 
ODE~\eqref{eq:ODENL}, with initial condition $f^0$.
There also exists a unique solution to the Vlasov-Poisson 
equation~\eqref{eq:Vla} with initial condition $f^0$, given by the time 
marginals $f_t$ of $Z_\cdot$.

The density (in position) $\rho_t$ associated to $f_t$ satisfies in particular 
the bound
$$
 \| \rho_t \|_\infty \le  2 \int_0^{+\infty} g_0(v) \,dv + 
\|f_0\|_\infty t.
$$
\end{prop}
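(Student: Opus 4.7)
The plan is to construct the solution by regularizing $f_0$, invoking the classical smooth theory for each regularization, and passing to the limit through the weak-strong stability of Theorem~\ref{thm:VP}; the density bound is then a direct characteristic computation at the approximation level that survives the limit by lower semicontinuity. Uniqueness of $f_t$ is immediate from Theorem~\ref{thm:VP}: once an $f_t$ with bounded density is built, any other measure solution with the same initial datum and finite first moment in $v$ satisfies $W_1(f_t,\nu_t) \le e^{a(t)} W_1(f_0,f_0)=0$. Uniqueness of the process then reduces to the ODE uniqueness for \eqref{eq:ODENL} with the now-known force field $E_t = -W'*\rho_t$; since formally $\partial_x E_t = \rho_t - 1 \in L^\infty$, this field is Lipschitz in $x$, so Cauchy-Lipschitz applies and the process is fully determined by its initial law.

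For existence I would take smooth compactly supported approximations $f_0^n \in C_c^\infty(\T \times \R)$ of $f_0$ obtained by truncating in $v$ and mollifying, arranged so that $\|f_0^n\|_\infty \le \|f_0\|_\infty$ and the pointwise envelope $g_0^n$ is dominated by $g_0$. Classical smooth Vlasov-Poisson theory (\cite{CK80,Bos05}) provides unique global smooth solutions $f_t^n$ with well-defined characteristics. The crucial a priori bound is immediate from $\|W'\|_\infty = \tfrac12$ (recall \eqref{eq:Wdec}): the force field satisfies $|E_t^n| \le \tfrac12$, hence $|V_0-v| \le t/2$ along backward characteristics, and because $f_t^n$ is transported we get
\[
f_t^n(x,v) \;\le\; g_0^n\bigl(\max(0,|v|-t/2)\bigr) \;\le\; g_0\bigl(\max(0,|v|-t/2)\bigr).
\]
Integrating in $v$ yields
\[
\|\rho_t^n\|_\infty \;\le\; t\,\|f_0\|_\infty + 2\int_0^{+\infty} g_0(u)\,du \;=:\; C(t),
\]
uniformly in $n$.

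This uniformity is exactly what allows Theorem~\ref{thm:VP} to be applied between two approximants $f^n,f^m$ (both have bounded density and, being smooth and compactly supported, finite first moment), giving $W_1(f_t^n,f_t^m) \le e^{A(t)} W_1(f_0^n,f_0^m)$ with $A(t)$ independent of $n,m$. Choosing the regularization so that $(f_0^n)$ is Cauchy in $W_1$, the sequence $(f_t^n)$ is Cauchy in $W_1$ uniformly on compacta, with a limit $f_t$ solving \eqref{eq:Vla} weakly (passage to the limit is straightforward since $W \in C(\T)$ implies that the potential $\phi^n = W*\rho^n$ converges strongly in $C(\T)$). The bound $\|\rho_t\|_\infty \le C(t)$ is inherited by weak-$*$ lower semicontinuity of $\|\cdot\|_\infty$, and the process $Z_\cdot$ with the required time marginals is produced via the second part of Lemma~\ref{lem:ODENL_mart}.

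The main obstacle I expect is verifying that the regularization can be chosen so that $(f_0^n)$ is Cauchy in $W_1$ while preserving the pointwise dominance by $g_0$. Under the (usually implicit) assumption that $f_0$ has finite first moment in $v$ this is routine; otherwise one must exploit the tail integrability $\int_R^{+\infty} g_0(v)\,dv \to 0$ coming from \eqref{eq:condL1} to control the $W_1$ cost of the velocity truncation, or else replace $W_1$ by a weaker topology (narrow convergence combined with tightness ensured by the uniform density bound and the boundedness of the force) and transfer the stability estimate to the limit through a diagonal argument.
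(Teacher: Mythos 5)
Your proposal is essentially correct but follows a genuinely different regularization strategy from the paper. You mollify the \emph{initial datum} $f_0$ (truncation in $v$ plus smoothing), keep the singular force $W'$, invoke the classical 1D theory for smooth data, and then make the approximants Cauchy by applying the weak--strong estimate of Theorem~\ref{thm:VP} between $f^n$ and $f^m$ (both qualify, one as the ``strong'' and one as the ``weak'' solution, thanks to the uniform density bound). The paper does the opposite: it keeps $f_0$ fixed and mollifies the \emph{force}, replacing $W'$ by a Lipschitz $W'_\ep$, uses Braun--Hepp/Dobrushin for the regularized dynamics, derives the same a priori density bound (your characteristic computation is identical to the paper's Step~2), and then reruns the stability argument of Theorem~\ref{thm:proc} between $Z^\ep$ and $Z^{\ep'}$, picking up an extra error term of size $\|\rho^\ep_t\|_\infty \max(\ep,\ep')$ from $|W'_\ep - W'_{\ep'}|$. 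Your route buys a cleaner Cauchy step (Theorem~\ref{thm:VP} applies verbatim, no extra term) at the price of a stronger external input (global well-posedness for the singular kernel with smooth data, rather than merely the Lipschitz-force theory) and of the obstacle you yourself flag. The paper's route buys exactly the resolution of that obstacle: since all approximants share the \emph{same} initial datum, $W_1(f^\ep_0,f^{\ep'}_0)=0$ and no Cauchy property of regularized initial data is ever needed.

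That flagged obstacle is the one real soft spot in your argument: condition~\eqref{eq:condL1} does \emph{not} imply $\int |v|\,f_0\,dx\,dv<\infty$ (take $g_0(v)\sim v^{-2}(\log v)^{-1}$ at infinity), so $W_1(f_0^n,f_0^m)$, which costs on the order of $\int_{R_n}^{R_m} v\,g_0(v)\,dv$ for the velocity truncation, need not tend to zero; your first proposed remedy does not obviously repair this, and the second (weaker topology plus diagonal argument) would force you to re-prove a stability estimate in that topology rather than quote Theorem~\ref{thm:VP}. If you add the hypothesis of finite first velocity moment your proof closes completely; without it, the force-mollification of the paper is the cleaner way out. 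Your remaining points are fine: the density bound passes to the limit by lower semicontinuity; the uniform $L^\infty$ bound on $\rho^n_t$ together with $W_1$-convergence does give uniform convergence of the force field (so the nonlinear term passes to the limit despite the jump of $W'$ at the origin, much as in the paper's Step~4); and your uniqueness argument for the process --- marginals are forced by Theorem~\ref{thm:VP}, hence the force field is known and Lipschitz since $\partial_x E_t=\rho_t-1\in L^\infty$, hence Cauchy--Lipschitz --- is correct and in fact more explicit than what the paper writes.
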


\subsection{Mean-field limit.}
If the initial condition $f_0$ satisfies the 
condition~\eqref{eq:condL1}, the mean-field limit around the unique solution $f$ starting from $f_0$  is a direct consequence of 
Theorem~\ref{thm:VP}. In fact, it suffice to apply this theorem to the particular case 
of initial data $\nu_0$ given by sum of $N$ Dirac masses. This is precisely 
stated in the following corollary, for which we will not give a specific proof.
\begin{cor}[of Theorem~\ref{thm:VP}] \label{cor:MFL}
Assume that $f_0$ satisfies the condition~\eqref{eq:condL1} and denote by 
$f_t$ the unique solution of the VP equation~\eqref{eq:Vla} with initial condition $f_0$.
Proposition~\ref{prop:exisVP} ensures that its  density $\rho_t$ is bounded.
For any $\ZZ^N$ solution of the $N$ particles system~\eqref{eq:Npart} with 
initial empirical measure $\mu^N_\ZZ(0)$, we have the following stability
estimate
\begin{equation} \label{eq:MFL}
W_1(\mu^N_\ZZ(t),f_t) \le e^{a(t)} W_1(\mu^N_\ZZ(0), f_0)
\qquad \text{with}
\quad 
a(t) := \sqrt 2 \, t + 8 \int_0^t \| \rho_s \|_\infty  \,ds.
\end{equation}

\end{cor}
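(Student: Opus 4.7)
The proof plan is essentially to verify that the empirical measure $\mu^N_\ZZ(t)$ fits into the hypotheses of Theorem~\ref{thm:VP}, and then read off the conclusion; the author already signals this is the route. First I would invoke Lemma~\ref{lem:NParttoVla}, which tells us that whenever $\ZZ^N$ is a global solution of the $N$-particle ODE system~\eqref{eq:Npart} in the sense of Definition~\ref{def:ODErig}, the associated empirical measure $\mu^N_\ZZ$ is a global weak measure solution of the Vlasov--Poisson equation~\eqref{eq:Vla} with initial datum $\mu^N_\ZZ(0)$. This is the bridge that puts the particle dynamics inside the framework of Theorem~\ref{thm:VP}.

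Next I would check the remaining hypothesis that $\mu^N_\ZZ(t)$ has finite first moment in $v$ at every time. Since $W'$ is bounded on $\T$ (by $1/2$, from~\eqref{def:PotW}), the second equation of~\eqref{eq:Npart} yields immediately that each velocity satisfies $|V_i^N(t)| \le |V_i^N(0)| + t/2$, so the empirical measure has finite (in fact bounded) first velocity moment for every $t \ge 0$. Meanwhile Proposition~\ref{prop:exisVP} guarantees that under the hypothesis~\eqref{eq:condL1} the unique strong solution $f_t$ with initial datum $f_0$ exists, has bounded density $\rho_t$, and therefore sits on the "strong" side of Theorem~\ref{thm:VP}.

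Now I apply Theorem~\ref{thm:VP} with this $f_t$ as the bounded-density solution and with $\nu_t := \mu^N_\ZZ(t)$ as the general measure solution; both have finite first moment in $v$ and both solve~\eqref{eq:Vla}. The theorem then gives directly
$$
W_1(\mu^N_\ZZ(t), f_t) \le e^{a(t)} W_1(\mu^N_\ZZ(0), f_0),
$$
with the same $a(t)=\sqrt 2\, t + 8\int_0^t \|\rho_s\|_\infty\,ds$, which is exactly~\eqref{eq:MFL}.

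There is no real obstacle here — all the difficulty has been concentrated in the proofs of Proposition~\ref{prop:exisVP} and Theorem~\ref{thm:VP}. The two points to be careful about are (i) that Lemma~\ref{lem:NParttoVla} really applies, which relies on the convention $W'(0)=0$ so that self-interaction does not spoil the weak formulation, and (ii) that the first-moment-in-velocity bound holds uniformly on compact time intervals, which we just saw is automatic on $\T$. Hence the corollary follows with no further work, which is why the author explicitly writes that no separate proof is provided.
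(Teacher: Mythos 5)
Your proposal is correct and matches the paper's intended argument exactly: the author states that the corollary follows by applying Theorem~\ref{thm:VP} with $\nu_0$ a sum of Dirac masses, using Lemma~\ref{lem:NParttoVla} to view $\mu^N_\ZZ(t)$ as a measure solution of~\eqref{eq:Vla} and Proposition~\ref{prop:exisVP} for the bounded density of $f_t$. Your additional verification of the finite first velocity moment (via the boundedness of $W'$) is a correct and worthwhile detail that the paper leaves implicit.
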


This implies the mean-field limit in large number of particles since if the 
positions and velocities of the particles are chosen so that 
$$
\mu^N_\ZZ(0) \xrightarrow
[N \rightarrow +\infty]{} f^0  \; \text{ weakly},
\qquad
\sup_{n \in \N} \int |v| \mu_\ZZ^N(0,dxdv) 
\rightarrow
\int |v| f_0(dx,dv),
$$
then $W_1(\mu_\ZZ^N(0),f_0) \rightarrow 0$ (See \cite{Vill03}). The above corollary implies that the weak convergence holds at any time. 
Remark that we may also obtain a similar result on the trajectories, \emph{i.e.} 
on the solutions of the nonlinear ODE~\eqref{eq:ODENL}.

\subsection{Propagation of molecular chaos}

Around profiles with bounded density, the propagation of chaos is also a 
consequence of the ``weak-strong'' stability theorem. 
\begin{cor}[of Theorem~\ref{thm:VP}]
Assume that $f^0$ satisfies the condition~\eqref{eq:condL1}. Then 
with the same notation than in Corollary~\ref{cor:MFL}, we have
$$
\E \bigl[ 
W_1(\mu^N_\ZZ(t),f_t)   \bigr] 
\le e^{a(t)} \E \bigl[ W_1(\mu^N_\ZZ(0), f_0) \bigr]
$$
\end{cor}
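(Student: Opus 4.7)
The plan is simply to take expectations of the pathwise bound provided by the preceding Corollary~\ref{cor:MFL}. That corollary asserts, for every realisation of the (possibly random) initial configuration $\ZZ^N(0)$ and every global solution $\ZZ^N(\cdot)$ of the $N$-particle system~\eqref{eq:Npart} issued from it, the deterministic inequality
$$
W_1\bigl(\mu^N_\ZZ(t),f_t\bigr) \le e^{a(t)}\, W_1\bigl(\mu^N_\ZZ(0),f_0\bigr).
$$
The crucial observation is that the exponent $a(t) = \sqrt 2\,t + 8\int_0^t \|\rho_s\|_\infty\,ds$ depends only on the density of the \emph{fixed} strong solution $f_\cdot$, hence is a non-random quantity, independent of $\ZZ^N$.

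Once the pathwise bound is in hand, it suffices to integrate with respect to the underlying probability measure. Because $e^{a(t)}$ is deterministic, it factors out, producing the announced inequality
$$
\E\bigl[ W_1(\mu^N_\ZZ(t),f_t)\bigr] \le e^{a(t)}\,\E\bigl[ W_1(\mu^N_\ZZ(0),f_0)\bigr].
$$

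The single point that deserves attention, and which I would flag as the main technical obstacle, is the measurability of the random variable $\omega \mapsto \mu^N_\ZZ(t,\omega)$. For almost every initial datum the DiPerna--Lions/Ambrosio theory invoked after Definition~\ref{def:ODErig} provides a measurable, measure-preserving flow, which takes care of measurability on a set of full measure under the initial law. On the remaining negligible set, where uniqueness of~\eqref{eq:Npart} may fail, one appeals to a standard measurable selection theorem to pick a measurable version of the trajectory, building on the existence statement of Proposition~\ref{prop:Nexis}. Both sides of the inequality are then genuine integrals in the probabilistic sense, and the result reduces to monotonicity of the expectation applied to the pathwise estimate; no further calculation is required.
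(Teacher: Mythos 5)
Your proposal is correct and coincides with the paper's own argument: the result is obtained by taking the expectation in the pathwise estimate~\eqref{eq:MFL} of Corollary~\ref{cor:MFL}, the factor $e^{a(t)}$ being deterministic since $a(t)$ depends only on the density of the fixed strong solution $f$. Your additional remark on measurability of $\omega \mapsto \mu^N_\ZZ(t,\omega)$ is a sensible precaution that the paper passes over in silence.
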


We shall not give the proof of that results, which is obtain by taking the expectation in~\eqref{eq:MFL}. We just make some comments :
\begin{itemize}
\item it is also possible to obtain other related results, for instance bounds 
on the 
exponential moments of $W_1(\mu^N_\ZZ(t),f_t)$ can be obtain in terms of 
exponential moments of $W_1(\mu^N_\ZZ(0), f_0)$,
which are known to exists, see for instance \cite{BoissardPhD},

\item the result may be adapted to the ``language'' of nonlinear ODE,

\item our results are stated in terms of convergence in law of the empirical 
measure towards $f_t$. For the relations to the more usual convergence of 
$k$-particles marginals ($k\ge 2$), we refer for instance to the famous lecture notes by 
Sznitman~\cite{Sznitman}. We also mentioned that 
quantitative equivalence estimates were also recently obtained 
in \cite{MischMou} and \cite{HauMisch}.
\end{itemize}

%

\section{Proofs of the main results} \label{sec:proof}
%
%

\subsection{Proof of Proposition~\ref{prop:Nexis}}

The existence will be proved with the help of the theory of differential inclusion \cite{Filippov}. 
First we will construct an associated differential inclusion, to which solutions do exist, and then prove that these solutions are indeed solutions to the original problem.

\medskip
{\sl Step 1. The construction of an adapted differential inclusion.}

We will replace the system of equation~\eqref{eq:Npart} by a differential inclusion
\begin{equation} \label{eq:inc}
\dot \ZZ^N(t) \in \mathcal B^N \bigl(\ZZ^N(t)\bigr),
\end{equation}
where $ \mathcal B^N$ is set-valued, \emph{i.e.} is an application from $\R^{2N}$ into $\mathcal P(\R^{2N})$. A Theorem by Filippov \cite[Chapter 2, Theorem 1]{Filippov} ensures the existence of global solutions to the differential inclusion~\eqref{eq:inc} for any initial condition if :
\begin{itemize}
 \item for each $\ZZ^N \in \R^{2N}$, the set $\mathcal B^N \bigl(\ZZ^N\bigr)$ is bounded, and  closed, and  convex,
 \item $\mathcal B^N$ is locally bounded, \emph{i.e.}  for any compact set $K \in \R^{2N}$ there exists a compact set $K' \in \R^{2N}$ such that $\mathcal B^N(\ZZ^N) \subset K'$ for all $\ZZ^N \in K$, 
 \item $\mathcal B^N$ is upper continuous with respect to the inclusion. 
\end{itemize}

A natural $\mathcal B^N$ may be constructed as follow. We will not write the full application $\mathcal B^N$, but assume that it has the form 
$$
\dot \ZZ^N(t) \in \mathcal B^N \bigl(\ZZ^N(t)\bigr)
\;\Leftrightarrow \;
\dot X^N_i = V_i, \qquad \dot V_i = \frac1N \sum_i F_{ij}^N,
$$
where the forces $F^N_{ij}$ satisfy some conditions to be specified.

A first choice for the $F^N_{ij}$ is to ask that for all $i,j \le N$:
\begin{itemize}
\item $\ds F_{ij}^N = -W'(X^N_i-X^N_j)$ when $X^N_i \neq X^N_j$,
\item $\ds F_{ij}^N \in \Bigl[-\frac12, \frac12 \Bigr]$ when $X^N_i = X^N_j$.
\end{itemize}
The associated $\mathcal B^N$ satisfies all the required properties, 
but unfortunately, the associated solutions will not be solution of the original ODE system in general.
For instance if you take two particles, both originally  at position $0$, with velocity $0$, then according to~\eqref{eq:Npart} and the fact that $W'(0)=0$, they should never move. But with our construction of $\mathcal B^N$, the solution where the two particles remain stuck together and accelerate uniformly with acceleration $\frac12$ is admissible. 

To overcome this problem, we should define $\mathcal B^N$ almost as before, but with the additional conditions that we want the \emph{action-reaction principle} to be valid in any case. The precise conditions are that for all $i <j \le N$:
\begin{itemize}
\item $\ds F_{ij}^N = -F_{ji}^N = -W'(X^N_i-X^N_j)$ when $X^N_i \neq X^N_j$,
\item $\ds F_{ij}^N = -F_{ji}^N \in \Bigl[-\frac12, \frac12 \Bigr]$ when $X^N_i = X^N_j$.
\end{itemize}
It is not difficult that to see that this $\mathcal B^N$ still fulfils the requested properties, and \cite[Chapter 2, Theorem 1]{Filippov} ensures the existence of a solution also denoted $\ZZ^N$ to the associated differential inclusion.   

\medskip
{\sl Step 2. Solutions of the differential inclusion are solutions to the $N$ particles problem. }

According to the final definition of $\mathcal B^N$, we see that problems may occur only when two particles are at the same position. Here, we will only deal with particles $1$ and $2$, the others case being similar. But since for any $t \in \R^+$ and any $\lambda >0$, the set $\bigl\{ s \in [0,t] \; \text {s.t. }  X^N_1(s) =X^N_2(s)
\; \text{ and }   | V^N_1(s) - V^N_1(s) | \ge \lambda \bigr\}$, is made of isolated points, we have also that 
the set
$$
A_t := \bigl\{ s \in [0,t] \; \text {s.t. }  X^N_1(s) =X^N_2(s)
\; \text{ and }   V^N_1(s) \neq  V^N_2(s)  \bigr\}
$$
is negligible with respect to the Lebesgue measure. 
Since the exact value of the force used on negligible set  has no influence on the solution,  it remains only to understand what happens on the set
$$
 B_t := \bigl\{ s \in [0,t] \; \text {s.t. }  X^N_1(s) =X^N_2(s) \; \text{ and }   V^N_1(s) =  V^N_1(s)  \bigr\}
$$
For this set, we will use the following lemma, that we shall prove later.
%
\begin{lem} \label{lem:Winf}
Assume that $g : [0,t] \rightarrow \R$ is a Lipschitz function (differentiable  almost everywhere by the Rademacher theorem). Then the set 
$$
\{   s \in [0,t] \; \text{ s.t. } \; g(0) = 0 \;\text{ and } \; g'(0) \neq 0   \} \quad \text{is negligible.}
$$
\end{lem}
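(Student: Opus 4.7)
\medskip
\noindent\textbf{Proof plan for Lemma \ref{lem:Winf}.} First I read the statement as: the set
$$
A := \bigl\{ s \in [0,t] \; : \; g(s) = 0 \;\text{ and }\; g'(s) \neq 0 \bigr\}
$$
is Lebesgue negligible (the variable ``$0$'' in the statement is clearly a typo for the running variable ``$s$'', since otherwise the conditions do not depend on $s$). Since $g$ is Lipschitz, Rademacher's theorem guarantees that $g'(s)$ is defined almost everywhere, so $A$ is well-defined up to a negligible set.

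The strategy is to slice $A$ according to the size of $|g'|$. Set
$$
A_n := \bigl\{ s \in A \; : \; |g'(s)| \ge 1/n \bigr\},
\qquad
A = \bigcup_{n \ge 1} A_n.
$$
At each $s \in A_n$, the definition of the derivative combined with $g(s)=0$ yields some $\delta_s > 0$ such that
$$
|g(s+h)| \ge \tfrac{|g'(s)|}{2} |h| \ge \tfrac{|h|}{2n} > 0
\quad \text{for all } 0 < |h| < \delta_s.
$$
In particular $g$ does not vanish on $(s-\delta_s,s+\delta_s) \setminus \{s\}$, so $s$ is an \emph{isolated} point of the zero set $g^{-1}(\{0\})$.

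To conclude I use the classical fact that any subset of $\R$ whose points are all isolated is at most countable (associate to each such point a rational interval containing it and no other point of the set; distinct points give disjoint intervals, hence distinct rationals). Therefore each $A_n$ is countable, $A$ is a countable union of countable sets, hence countable and in particular Lebesgue negligible. There is no real obstacle here: the only point requiring a little care is the observation that differentiability with nonzero derivative at a zero forces the zero to be isolated, which is what makes the stratification by $|g'| \ge 1/n$ work.
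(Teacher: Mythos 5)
Your proof is correct, but it takes a genuinely different route from the paper's. The paper fixes $\lambda>0$, considers $A_\lambda=\{s: g(s)=0,\ g'(s)\ge\lambda\}$, and argues by contradiction: if $|A_\lambda|>0$ it has a Lebesgue density point $s$, and integrating $g'$ over $[s-\ep,s+\ep']$ (using the Lipschitz bound $|g'|\le 1$ off $A_\lambda$) forces $g(s+\ep')-g(s-\ep)\ge \frac\lambda2(\ep+\ep')>0$, which is incompatible with $g$ vanishing on the nearby points of $A_\lambda$. You instead observe that at any zero where the derivative exists and is nonzero, the first-order expansion makes that zero isolated in $g^{-1}(\{0\})$, and a subset of $\R$ all of whose points are isolated is countable, hence null. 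Your argument is more elementary (no density-point machinery, no use of the Lipschitz constant beyond a.e.\ differentiability) and yields the strictly stronger conclusion that the set is \emph{countable}; note also that your stratification by $|g'|\ge 1/n$ is not actually needed, since the isolation argument works pointwise at every $s\in A$. One small wording fix: in the countability argument the rational intervals need not be \emph{disjoint}; what matters is that each interval contains exactly one point of the set, so the assignment $s\mapsto I_s$ is injective, which is all you need. The paper's density-point technique is the one that generalizes (e.g.\ to proving $g'=0$ a.e.\ on level sets in higher-dimensional or Sobolev settings), but for the one-dimensional statement at hand your proof is shorter and sharper.
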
 

Applying this Lemma with the function  $g(s) = V^N_1(s)-  V^N_2(s)$ we get that the set
$$
\bigl\{ s \in [0,t] \; \text {s.t. }  X^N_1(s) =X^N_2(s) \; \text{ and } \;  V^N_1(s) = V^N_1(s)  
 \; \text{ and } \; 
 (V^N_1)'(s) \neq   (V^N_1)'(s)
 \bigr\}
$$
is negligible. Then the only part we have really to deal with is
$$
C_t := \bigl\{ s \in [0,t] \; \text {s.t. }  X^N_1(s) =X^N_2(s)  \; \text{ and } \;  V^N_1(s) = V^N_1(s)  
 \; \text{ and } \; 
 (V^N_1)'(s) =  (V^N_1)'(s)
 \bigr\}.
$$

But remark that for $ s \in C_t$ the particles are at the same position and are submitted to the  same  global force. If no other particle share their common position at this time, it follows from the (final) definition of $\mathcal B^N$ that $F_{12}^N = -F{21}^N=0$.
If there is another particle that share their common position, the situation is a little bit different.
Say for instance that the third particles is also at the same position. Then, the configuration where $F_{i,i+1} = -F_{i+1,i} = \frac12$ (whit the convention $3+1 =1$) is admissible. In that case , $F_{12} \neq 0$, but it is also clear that we can set all the forces $F_{i,i+1}$ to zero without changing anything to all the global forces seen by the three particles. 

In general, since the particle $1$ and $2$ are submitted to  the same force at time $s$ (remember that $s \in C_t$), and because the action-reaction principle is preserved  by the differential inclusion, this common force is also the one that applies to their center of mass. And in the case where $F_{12}(s) \neq 0$, we will not change the solution if we replace the $F_{ij}^N$ by the force $\tilde F_{ij}^N$ acting on their centrer of mass, defined by 
$$
\tilde F_{12}=0 , \qquad \tilde F_{1i}= \tilde F_{2i} = - \tilde F_{i1} = - \tilde F_{i2}= \frac12 (F_{1i}+ F_{2i}) \; \text{ for }\; i \ge 3.
$$  
In fact that replacement is acceptable by the convexity of $\mathcal B^N$, and this modification does not affect the global force seen by each particle. It means that the original equation\eqref{eq:Npart} is already fulfilled at this time $s$.

Finally, we have proved that the system of equations~\eqref{eq:Npart} is satisfied for almost all time and for all couple $i \neq j$. As already said, this is equivalent to the definition~\ref{def:ODErig}.

\begin{proof}[Proof of Lemma~\ref{lem:Winf}]
For simplicity we will assume that $g$ has a Lipschitz constant  equals to one.  If for all $\lambda>0$, the set $A_ \lambda := \{   s \in [0,t] \; \text{ s.t. } \; g(0) = 0 \;\text{ and } \; g'(0)  \ge  \lambda  \}$ is negligible, and if also a similar statement is true for the negative value of $g'$, then it is not difficult to conclude. 

So we will prove by contradiction that $A_\lambda$ is of zero measure for any $\lambda >0$. If not, then we may choose a $\lambda >0$ such that 
 $A_\lambda$ has strictly positive measure. It is then classical that $A_\lambda$ possesses an accumulation point  $s$ (See for instance \cite{Stein}, in particular for the argument with non centered interval), for which
$$
\lim_{\ep,\ep' \rightarrow 0} \alpha(\ep,\ep') = 1, \qquad
\text{where} \quad \alpha(\ep,\ep') := 
\frac{  \bigl| 
\{   u \in [s-\ep,s+ \ep'] \; \text{ s.t. } \; g(0) = 0 \;\text{ and } \; g'(0)  \ge  \lambda   \}
 \bigr|   }{ \ep+\ep'} 
 $$ 
But we also have
\begin{align*}
g(s+\ep') - g(s-\ep) & = \int_{s-\ep}^{s+\ep'} g'(r) \,dr \ge 
(\ep + \ep') \bigl[   \lambda \alpha(\ep,\ep') - (1-\alpha(\ep,\ep')) \bigr]
\\
 & \ge 
(\ep + \ep') \bigl[  (1+ \lambda) \alpha(\ep,\ep') - 1 \bigr],\\
& \ge \frac \lambda 2 (\ep + \ep') \qquad \text{for all } \; \ep,\ep' \; \text{ small enough}. 
\end{align*}
But this contradicts the fact that $s$ is an accumulation point for $A_\lambda$. 
\end{proof}

\subsection{Proof of Theorem~\ref{thm:ODENLex}}
 
 The proof is separated in five steps. In the first one, we construct an approximating sequence made of solutions to the N particles systems. In the second, we show the tightness of that sequence, and in the last three ones, we show that the time marginals of the limit process are solution of the Vlasov-Poisson equation~\eqref{eq:Vla}. Afterthat, the existence of a process solution to the nonlinear ODE~\eqref{eq:ODENL} can be obtained by an application of the second point of Lemma~\ref{lem:ODENL_mart}.
 
 \medskip
{\sl Step 1. Construction of the approximating sequence.}

This is quite simple. For each N choose a sequence $\ZZ^N_0$ of initial positions and velocities, such that the associated empirical measures $\mu^N_0 = \mu_\ZZ^N(0)$ converges weakly towards the measure $\nu_0$.  Since $\int |v| \nu_0(dx,dv) <+ \infty$, we may also assume that
\begin{equation} \label{estim:mominit}
\sup_{N \in \N} \int |v|  \mu_\ZZ^N(0,dx,dv) < + \infty.
\end{equation}
Applying Proposition~\ref{prop:Nexis}, we obtain a global solution $\ZZ^N_\cdot$ to the system of ODE~\eqref{eq:Npart}.  Then Lemma~\ref{lem:NPtoODENL} ensure that it also define a process denoted by $Z^N_\cdot$, solution to the nonlinear ODE~\eqref{eq:ODENL} with initial 
condition $\mu_\ZZ^N(0)$.

\medskip
{\sl Step 2. Tightness of the approximating sequence.}
Using the equation~\eqref{eq:ODEpre}, we get that for almost all $(s,u)$ satifying $0<s<u<t$ 
\begin{equation} \label{estim:VLip}
 |V^N_u -V^N_s|   \le  \int_s^u \E_{\bar Z_\cdot }\bigl[ | W'(X^N_r - \bar X^N_r| \bigr] \,dr  
 \le \frac12 |u-s| .
 \end{equation}
And with the particular choice  $s=0$, we obtain  the bound $| V_u^N|  \le  |V_0^N| + \frac u 2$.
That bound allows to obtain a similar estimate for the position
\begin{equation} \label{estim:XLip}
|X^N_u -X^N_s|   \le \int_s^u     |V^N_r| \,dr \le \Bigl( |V^N_0| + \frac t2 \Bigr) |s-u|.
\end{equation}
Merging estimates~\eqref{estim:VLip} and~\eqref{estim:XLip} and taking the supremum on all couple $(u,s)$ we get
$$
\sup_{s,u \in [0,t]} \frac{ |Z^N_u -Z^N_s|}{|u-s|} \le  |V^N_0| + \frac {t+1}2.
$$
Taking the expectation and using~\eqref{estim:mominit}, we get that
\begin{equation} \label{estim:exp}
\sup_{N \in \N}   \E \bigg[  \sup_{s,u \in [0,t]} \frac{ |Z^N_u -Z^N_s|}{|u-s|} \biggr]  < + \infty.
\end{equation}
This implies the tightness. In fact, by the Arzel\`a-Ascoli theorem, the subsets  $\mathcal B_R^\lambda(t)$ of $C([0,t],\T \times \R)$ defined by 
$$
\mathcal B_R^\lambda(t) := \Bigl\{  Z_\cdot \text{ s.t. }   |X_0| \le R \quad \text{and} \quad 
 \bigl\| Z' \bigr\|_\infty \le  \lambda 
 \Bigr\}
$$
are compact for all $R,\lambda,t \in \R^+$. But the fact that 
$\mathbb P(Z_0^N \in \cdot) = \mu^N_0$, together with estimates\eqref{estim:mominit} and~\eqref{estim:exp} imply that the compact sets $\mathcal B_R^\lambda(t)$ are almost of full measure when $R$ and $\lambda$ are large.

Therefore, up to the extraction of a subsequence, we may assume that the sequence of processes $(Z^N_\cdot)$ converges weakly towards a process $Z_\cdot$. To understand this properly, we must say that $C([0,+\infty),\T \times \R)$ is endowed with the topology of uniform convergence on every bounded time interval.

Remark also that the processes $Z^N_\cdot$ are concentrated on the set of trajectories such that $t \mapsto V^N_t$ are uniformly Lipschitz with constant $\frac12$, a set that is closed under the topology of uniform convergence on any bounded time interval. So this is also true for the process $Z_\cdot$ : almost surely, the velocity component of the limit process $Z_\cdot$ has Lipschitz trajectories, with constant $\frac12$.

\medskip
{\sl Step 3. Characterization of the limit process $Z_\cdot$.}

First by construction it is clear that the limit process $Z_\cdot$ satisfies the required initial condition.
Next we will show that its time marginals satisfy the Vlasov-Poisson equation~\eqref{eq:Vla}.
For this choose two times $s < t$ and a smooth function $\varphi$. We need to show that
\begin{equation} \label{eq:mart2}
\begin{split}
\E_{(Z_\cdot,\bar Z_\cdot)} \biggl[  \Bigl(
\varphi(t,Z_t) - \varphi(s,Z_s)  - \int_s \bigl[ \partial_t \varphi(u,Z_u) + V_s \partial_x \varphi(u,Z_u) 
\\
- W'(X_u -\bar X_u) \partial_v \varphi(u,Z_u) \bigr] \,du
\Bigr) \biggr] = 0.
\end{split}
\end{equation}
since $Z^N$ are solution of the nonlinear ODE~\eqref{eq:ODENL}, it is clear that this is satisfied if we replace $Z_\cdot$ by $Z_\cdot^N$. Because of the continuity of $\varphi$, we can use classical results, see  Billingsley \cite[p. 30]{Bill},  and  pass to the limit in 
$$
\E_{(Z^N_\cdot,\bar Z^N_\cdot)} \bigl[  \varphi(t,Z^N_t) - \varphi(s,Z^N_s) \bigr]
\xrightarrow[\;N \rightarrow + \infty \;]{}
\E_{(Z_\cdot,\bar Z_\cdot)} \bigl[  \varphi(t,Z_t) - \varphi(s,Z_s) \bigr].
$$
In fact, only the term containing $W'$  is more involved, since the interaction force is discontinuous at the origin.
To control it, we need to separate the distant and close interactions.
For this, we choose an $\ep>0$ and introduce a smooth cut-off function $\chi_\ep$ defined on $\T$ and satisfying (we use again the identification $\T = \bigl[-\frac12, \frac12 \bigr)$)
$$
\chi_\ep(x) = 1 \;\text{ if } \; |x|  \le \ep, \qquad \chi_\ep(x) = 0 \;\text{ if  }\; |x| \ge  2 \ep.
$$
Then, we can decompose 
\begin{align*} 
\E_{(Z^N_\cdot,\bar Z^N_\cdot)} \biggl[  \int_s^t  & W'(X^N_u -\bar X^N_u)  \partial_v \varphi(u,Z^N_u) \bigr] \,du  \biggr] \\
& = \E_{(Z^N_\cdot,\bar Z^N_\cdot)} \biggl[  \int_s^t W'(X^N_u -\bar X^N_u) (1 - \chi_\ep(X^N_u-\bar X^N_u))  \partial_v \varphi(u,Z^N_u) \bigr] \,du  \biggr] \\
& \hspace{10mm} +
\E_{(Z^N_\cdot,\bar Z^N_\cdot)} \biggl[  \int_s^t W'(X^N_u -\bar X^N_u)  \chi_\ep(X^N_u-\bar X^N_u) \partial_v \varphi(u,Z^N_u) \bigr] \,du  \biggr].
\end{align*}
In the first term of the r.h.s that  take into account the interactions between distant particles, everything is now continuous. So, as before, it is not difficult to pass in the limit in that term. It converges towards 
$$
\E_{(Z_\cdot,\bar Z_\cdot)} \biggl[  \int_s^t W'(X_u -\bar X_u) \bigl(1- \chi_\ep(X_u-\bar X_u) \bigr) \partial_v \varphi(u,Z_u) \bigr] \,du  \biggr]  
$$
It remains to control the second term. We will show that it goes to zero as $\ep$ goes to zero, uniformly in $N$, and that this convergence also holds for the limit process $Z_\cdot$. This will conclude the proof.

\medskip
For this we choose a $\beta < \frac12$ and introduce a second smooth cut-off function defined on $\R$, such that
$$
\xi_\ep(v) = 1 \;\text{ if } \; |v|  \le \ep^\beta, \qquad \xi_\ep(v) = 0 \;\text{ if  }\; |v| \ge  2 \ep^\beta.
$$
We use it to separate the remaining term in two : one term taking into account the close interactions with large relative speed, and the second one taking into account the close interactions with small relative speed
\begin{align*} 
\E_{(Z^N_\cdot,\bar Z^N_\cdot)} \biggl[  & \int_s^t W'(X^N_u -\bar X^N_u)  \chi_\ep(X^N_u-\bar X^N_u) \partial_v \varphi(u,Z^N_u) \bigr] \,du  \biggr] \\
& = \E_{(Z^N_\cdot,\bar Z^N_\cdot)} \biggl[  \int_s^t W'(X^N_u -\bar X^N_u) \chi_\ep(X^N_u-\bar X^N_u) 
(1- \xi_\ep(V^N_u-\bar V^N_u) )  \partial_v \varphi(u,Z^N_u) \bigr] \,du  \biggr] \\
& \hspace{10mm} +
\E_{(Z^N_\cdot,\bar Z^N_\cdot)} \biggl[  \int_s^t W'(X^N_u -\bar X^N_u)  \chi_\ep(X^N_u-\bar X^N_u) 
\xi_\ep(V^N_u-\bar V^N_u) \partial_v \varphi(u,Z^N_u) \bigr] \,du  \biggr].
\end{align*}

\medskip
{\sl Step 4. The effects of close interaction with large relative speed.}
To understand the first term in the r.h.s. of the above decomposition, we choose a particular couple of trajectories, denoted by  $(Z^N_\cdot,\bar Z^N_\cdot)$
(the notation is the same than for processes, which are probabilities on trajectories, but we will always explicitly mention which case we are referring to). Since $\partial_v \varphi$ and $W'$ are bounded, we may write
\begin{align*}
  \Bigl|  \int_s^t  & W'(X^N_u -\bar X^N_u) \chi_\ep(X^N_u-\bar X^N_u)  (1- \xi_\ep(V^N_u-\bar V^N_u) )  \partial_v \varphi(u,Z^N_u) \bigr] \,du \Bigr|  \le  C \, \bigl| A(Z^N,\bar Z^N) \bigr| \\
 & \text{where }\;   A(Z^N,\bar Z^N) :=  \bigl\{  u \in [s,t] \; \text{ s.t. } \;  |X^N_u-\bar X^N_u| < 2 \ep \; \text{ and } \; 
  |V^N_u-\bar V^N_u| >  \ep^\beta   
   \bigr\}.
\end{align*}
We used the notation $|A|$ to denotes the Lebesgue measure of any measurable subset $A$ of $\R$. 
Now, we pick up a time $r \in A(Z^N,\bar Z^N) $. Since $V^N_\cdot$ and $\bar V^N_\cdot$ are almost surely Lipschitz with constant $\frac12$, we may write that 
$$
\forall u \in I^N_r :=  \biggl[ r, r +   \frac{|V^N_r - \bar V^N_r|}2 \biggr], \quad  |V^N_u - \bar V^N_u| \in \biggl[ \frac{|V^N_r - \bar V^N_r|}2,  \frac{3\,|V^N_r - \bar V^N_r|}2 \biggr],
$$
and in particular, the sign of the relative speed do not change on the interval $I^N_r$. 
The condition $\beta< \frac12$ ensures that the two particles will escape the zone $\{ |X^N_u-\bar X^N_u| < 2 \ep \}$ during the time interval $I^N_r$. Precisely, they are inside this zone at time $r$, and will after that have an increase (or decrease) of relative position larger than  $C |V^N_r - \bar V^N_r|^2 \ge C \ep^{2 \beta} > 4 \ep$ if $\ep $ is small enough.

Next, the previous bound by above on the relative velocities, implies that on the time interval $I_r^N$ the two particles can not stay $2\ep$-close too long. 
Such an encounter has a maximal duration of $\frac{8\ep}{|V^N_r - \bar V^N_r|}$. Of course, by periodicity the same particles  may have several close encounters on $I^N_r$ if their relative speed is large, but they can not have more than $C |V^N_r - \bar V^N_r|^2$.
Summing up, we find than the total duration of this close encounters on the time interval $I^N_r$ is smaller than
$ C \ep   |V^N_r - \bar V^N_r| =  C' \ep \bigl| I^N_r \bigr|. $
Covering $A(Z^N_\cdot,\bar Z^N_\cdot)$ with such tile intervals, we end up with
$$
\bigl| A(Z^N,\bar Z^N) \bigr| \le C \ep |s-t|.
$$
Taking now the expectation, we get
\begin{align*}
\E_{(Z^N_\cdot,\bar Z^N_\cdot)} \biggl[  \int_s^t W'(X^N_u -\bar X^N_u) \chi_\ep(X^N_u-\bar X^N_u) 
(1- \xi_\ep(V^N_u-\bar V^N_u) )  \partial_v \varphi(u,Z^N_u) \bigr] \,du  \biggr]  \le C \ep \, |t-s|,
\end{align*}
uniformly in $N$. Remark that since the previous argument only requires that the velocities are almost surely Lipschitz (in time). So
 it also apply with $Z_\cdot$, instead of $Z^N_\cdot$ (See the last remark at the end of Step 2).

\medskip
{\sl Step 5. The effect of close collision with small relative speed.}

For the remaining term, we shall use a symmetry argument similar to the one used by Delort in its proof of  existence of weak  solutions to the 2D Euler equation \cite{Delo91}. 
In fact  using the oddness of the interaction force $W'$
\begin{align*} 
 & \E_{(Z^N_\cdot,\bar Z^N_\cdot)}  \biggl[  \int_s^t W'(X^N_u   -\bar X^N_u)  \chi_\ep(X^N_u-\bar X^N_u) 
\xi_\ep(V^N_u-\bar V^N_u) \partial_v \varphi(u,Z^N_u) \bigr] \,du  \biggr] \\
& =  
\E_{(Z^N_\cdot,\bar Z^N_\cdot)} \biggl[  \int_s^t W'(X^N_u   -\bar X^N_u)  \chi_\ep(X^N_u-\bar X^N_u) 
\xi_\ep(V^N_u-\bar V^N_u) ( \partial_v \varphi(u,Z^N_u) - \partial_v \varphi(u,\bar Z^N_u) \bigr] \,du  \biggr].
\end{align*}
But thanks to the two cut-off functions $\chi$ and $\xi$, we now have for all $(x,v,\bar x, \bar v)$
$$
\chi_\ep(x- \bar x ) 
\xi_\ep(v -\bar v )  \bigl|  \partial_v \varphi(u,x,v) - 
\partial_v \varphi(u,\bar x , \bar v) \bigr|
\le  C  \| \nabla^2 \varphi \|_\infty \ep^\beta,
$$
and this implies that our last expectation is bounded by $C \ep^\beta$, with a constant $C$ 
independent of $N$. That bound also apply when we replace $Z^N$ by $Z$. 


\subsection{Proof of Theorem~\ref{thm:proc}}

The proof of our central result is rather short.

\medskip
Choose an optimal coupling of the random variables $Z^1_0$ and $Z^2_0$. We then couple the processes $Z^1_\cdot$ and $Z_\cdot^2$, using the previous coupling on the initial position independently of what happens after time $0$.  
This is relatively simple if there exists a unique trajectory starting from any initial condition.
In the general case, the coupling is defined has the law on the couples $(Z^1_\cdot, Z^2_\cdot)$ of trajectories such that for any smooth function $\varphi$ on $(\T\times \R)^2$, and  $\psi^1$ and $\psi^2$ on $C[0,\infty,\T \times \R)$
\begin{align*}
\E_{(Z^1_\cdot, Z^2_\cdot)} \bigl[ \varphi(Z^1_0,Z^2_0) \psi^1(Z^1_\cdot) \psi^2(Z^2_\cdot )\bigr] = 
\E_{(Z^1_0, Z^2_0)} \bigl[\varphi(Z^1_0,Z^2_0) \bigr]
\times 
\E_{Z^1_\cdot} \bigl[\psi^1(Z^1_\cdot) \big| Z^1_0 \bigr]
\times 
\E_{Z^2_\cdot} \bigl[ \psi^2(Z^2_\cdot) \big| Z^2_0 \bigr].
\end{align*}
 We also denote by $(\bar Z^1_\cdot,\bar Z^2_\cdot)$ 
 an  independent copy of the couple $(Z^1_\cdot,Z^2_\cdot)$. We can now
differentiate $\E[|Z^1_t - Z_t^2|]$ with respect to time and get
\begin{align*}
\frac d{dt} \E\bigl[|Z^1_t - Z_t^2| \bigr] & \le 
\E\bigl[|V^1_t - V_t^2|\bigr]
+ \E\bigl[|W'(X^1_t - \bar X_t^1) - W'(X^2_t - \bar X_t^2)|\bigr] \\
& \le \sqrt 2 \, \E\bigl[|Z^1_t - Z_t^2| \bigr]
+ \E \bigl[|\sign(X^1_t - \bar X_t^1) - \sign(X^2_t - \bar X_t^2)|\bigr]
\end{align*}
To obtain the second line, we use the decomposition~\eqref{eq:Wdec} of $W'$, 
and the fact that \mbox{$a+ b \le \sqrt{2(a^2+b^2)}$} for any $a,b \ge 0$.

To understand the remaining term, we remark first that $X^1_t-X^2_t$ and $\bar 
X^1_t - \bar X^2_t$ are ``small'' since the trajectories of respectively 
$(Z^1_\cdot,Z^2_\cdot)$  and $(\bar Z^1_\cdot, \bar Z^2_\cdot)$ are coupled in 
a more or less optimized way, while $X^1_t - \bar X_t^1$ is usually ``large'' 
since the trajectories of $Z^1_\cdot$ and $\bar Z^1_\cdot$ are independent (the same is true for $Z^2_\cdot$ and $\bar Z^2_\cdot$).

Neglecting the problem raised by the periodization of the $\sign$ function, 
which is not relevant, we see that they $X^1_t - \bar X_t^1$ and $X^2_t - \bar X_t^2$ may have different signs only if
$$
|X^1_t - \bar X_t^1| \le |X^1_t-X^2_t| + |\bar X^1_t - \bar X^2_t|
\le 2 \max\bigl(|X^1_t-X^2_t|,|\bar X^1_t - \bar X^2_t|\bigr) .
$$
This condition is not symmetric with respect to $Z^1_\cdot$ and $Z^2_\cdot$, 
but this asymmetry is very important in order to obtain  our weak-strong stability 
principle. Next
\begin{align*}
\E\bigl[|\sign(X^1_t - \bar X_t^1) - \sign(X^2_t - \bar X_t^2)| \bigr]
& \le \E\bigl[ \indic_{|
X^1_t - \bar X_t^1| \le 2 \max\bigl(|X^1_t-X^2_t|,|\bar X^1_t - \bar 
X^2_t|\bigr)} \bigr] \\
& \le 
\E\bigl[ \indic_{|
X^1_t - \bar X_t^1| \le 2 |X^1_t-X^2_t|} \bigr]
+
\E\bigl[ \indic_{|
X^1_t - \bar X_t^1| \le 2 |\bar X^1_t- \bar X^2_t|} \bigr] \\
& \le 2\, \E\bigl[ \indic_{|
X^1_t - \bar X_t^1| \le 2 |X^1_t-X^2_t|} \bigr] \\
& \le 2 \, \E_{Z^1_\cdot,Z^2_\cdot} \Bigl[  \E_{\bar Z^1_\cdot} 
\bigl[  \indic_{|
X^1_t - \bar X_t^1| \le 2 |X^1_t-X^2_t|}  \bigr]
\Bigr] \\
& \le  8 \| \rho_t\|_\infty \E \bigl[ |X^1_t-X^2_t| \bigr].
\end{align*}
In the third line, we have used the symmetry in $Z_\cdot, \bar Z_\cdot$. 
The last line is obtained by using the independence of $\bar Z^1$ with respect 
to $(Z^1_\cdot,Z^2_\cdot)$ and the assumption that $\bar X^1_t$ has a bounded density 
$\rho_t$. All in all, we get that
$$
\frac d{dt} \E \bigl[|Z^1_t - Z_t^2|\bigr] \le (\sqrt 2 + 8 \| \rho_t\|_\infty )\E \bigl[|Z^1_t - 
Z_t^2|\bigr].
$$
This concludes the proof.

\subsection{Proof of Proposition~\ref{prop:exisVP}}

The proof is made in four steps. In the first one, we construct an approximating sequence.
In the second one, we establish an a priori estimate on the density.  In the third, we show that the approximating sequence has the Cauchy property. Finally, in the last one we prove that the limit process is a solution of the nonlinear ODE~\eqref{eq:ODENL}.

\medskip
{\sl Step 1. Construction of an approximating sequence.}
We will as usual mollify the interaction force $W'$. For $\ep \in \bigl( 0, \frac12 \bigr)$ we define a piecewise linear and continuous approximation of $W'$ by
$$
 W_\ep(x) :=  
 \begin{cases}
  \frac{x^2 - |x|}2  &\text{ if } \; |x| \in \bigl(\ep, \frac12\bigr) \\
 - \bigl(\frac1{2\ep} -1 \bigr) \frac{x^2}2 - \frac \ep 4 & \text{ if } \; x \in [-\ep,\ep]
\end{cases}, 
 \qquad 
 - W'_\ep(x) := 
 \begin{cases}
  -\frac12 -x  &\text{ if } \; x \in \bigl[-\frac12, - \ep\bigr) \\
 \frac12 -x &\text{ if } \; x \in \bigl(\ep, \frac12\bigr) \\
 \bigl(\frac1{2\ep} -1 \bigr) x & \text{ if } \; x \in [-\ep,\ep]
\end{cases}.
$$
Remark that  $W'_\ep$ is bounded by $\frac12$ and has Lipschitz regularity with constant $\frac1{2\ep}$, and that $W_\ep$ (as $W$) take values in $\bigl[ -\frac18,0 \bigr]$. We now consider the nonlinear ODE with the interaction potential  $W_\ep$. In others words, we look at solutions, in the sense of Definition~\eqref{def:ODENL}, to
\begin{equation} \label{eq:ODENLep}
\forall  t \in [0,+\infty), \qquad 
  X^\ep_t = X^\ep_0 + \int_0^t V^\ep_s \,ds, \qquad
 V^\ep_t = V^\ep_0  - \int_0^t \E_{\bar Z^\ep_\cdot} \bigl[ W'_\ep(X^\ep_s- \bar X^\ep_s)  \bigr] \,ds,
 \end{equation}
with initial data with law $f_0$. But the well-posedness of the associated ``smoothed" Vlasov-Poisson equation is now well understood. The existence and uniqueness of solutions is known even if $f_0$ is a measure. In that setting, the first important works were done by Braun and Hepp \cite{BraHep77}, and Dobrushin \cite{Dobr79}, and Neunzert and Wick \cite{Neun79}. 
Once we have a solution of the mollified VP equation~\eqref{eq:Vla}, we can use the second part of Lemma~\ref{lem:ODENL_mart} to obtain a process $Z^\ep_\cdot$ solution of the nonlinear ODE~\eqref{eq:ODENLep}. But, a (maybe) better strategy will be to adapt on of the proofs mentioned above to the 
setting of nonlinear ODE. In fact, this will correspond to adapt our proof of the weak-strong stability result to the more simple case of Lipschitz interaction forces.

\medskip
{\sl Step 2. An a priori estimates on $\| \rho^\ep_t \|_\infty$.}
From the equations~\eqref{eq:ODENLep} satisfied by $Z^\ep_\cdot$, we get  for all $t \in \R^+$
$$
| V_t^\ep|  \le |V_0^\ep| + \int^t_0 \E_{\bar Z^\ep_\cdot }\bigl[ | W'(X^\ep_s - \bar X^\ep_s| \bigr] \,ds  
\le  |V_0^\ep| + \frac t 2.
$$
Since $f^\ep$ is constant along the trajectories in our smooth setting, we get using~\eqref{eq:condL1} that
$$
| f^\ep_t(X^\ep_t,V_t^\ep) | =  | f_0(X^\ep_0,V^\ep_0) | \le g_0(|V^\ep_0|)
\le g_0 \Bigl(  |V_t^\ep| - \frac t 2 \Bigr).
$$
We used that $g_0$ is decreasing. Remark that the previous bound has no sense if $|V_0^\ep| \le \frac t 2$.
But in that case, we always have $| f^\ep_t(X^\ep_t,V_t^\ep) | \le \|f_0\|_\infty = g_0(0)$. 
So it will be consistent if we extend $g$ on $\R^-$, setting $g(x) = g(0)$ for all $x<0$.
So for a trajectory such that $(X^\ep_t,V^\ep_t) =(x,v)$, we get with that convention
$$
f_t^\ep(x,v) \le g_0 \Bigl(  |v| - \frac t 2 \Bigr).
$$
Integration on the variable $v$ leads to the bound
\begin{equation} \label{estim:Rho}
 \rho^\ep_t(x) = \int_\R  f_t^\ep(x,v) \,dv \le   \int_\R g_0 \Bigl(  |v| - \frac t 2 \Bigr) \,dv
 \le g(0)\,  t  +  2 \int_0^{+\infty} g_0(v) \,dv.
\end{equation}

{\sl Step 3. The Cauchy property of the sequence $Z^\ep_t$.}

Thanks to the a priori bound~\eqref{estim:Rho}, we may use a variant of the stability argument of Theorem~\ref{thm:proc}. In fact, if we want to compare two processes $Z^\ep_\cdot$ and $Z^{\ep'}_\cdot$ for two different $\ep,\ep'$, we can basically apply the same strategy than in the proof of that theorem with $Z^1_\cdot = Z^\ep_\cdot$ and $Z^2_\cdot=Z^{\ep'}_\cdot$. The only difference is that since $Z^\ep_\cdot$ is driven by the interaction potential $W_\ep$ and $Z^{\ep'}_\cdot$ is driven by $W_{\ep'}$, we will get an extra term (and also a less important  additional $\ep$)
\begin{equation} \label{estim:Zep}
\frac d{dt} \E[|Z^\ep_t - Z_t^{\ep'}|] \le (\sqrt 2 + 8 \| \rho^\ep_t\|_\infty ) \bigr( \E[|Z^\ep_t - 
Z_t^{\ep'}|] + 2 \ep' \bigr) + 
\E \bigl[|W'_\ep(X^\ep_t - \bar X_t^\ep) - W'_{\ep'}(X^\ep_t - \bar X_t^\ep)| \bigr]. 
\end{equation}
But, from the definition of $W'_\ep$ and, we have
$$
|W'_\ep(X^\ep_t - \bar X_t^\ep) - W'_{\ep'}(X^\ep_t - \bar X_t^\ep)| \le \frac12 \indic_{|X^\ep_t - \bar X_t^\ep| \le \max(\ep,\ep')}.
$$
But, if we take the expectation and use the bound on $\rho^\ep_t$ has in the proof of Theorem~\ref{thm:proc}, we can bound the second term in the right hand side of~\eqref{estim:Zep}  by
$$
\E \bigl[|W'_\ep(X^\ep_t - \bar X_t^\ep) - W'_{\ep'}(X^\ep_t - \bar X_t^\ep)| \bigr]
 \le
 \| \rho^\ep_t\|_\infty  \max(\ep,\ep'),
$$
and finally get 
$$
\frac d{dt} \E[|Z^\ep_t - Z_t^{\ep'}|] \le (\sqrt 2 + 9 \| \rho^\ep_t\|_\infty ) \bigr( \E[|Z^\ep_t - 
Z_t^{\ep'}|] + 2 \max(\ep,\ep')  \bigr). 
$$
After slight modifications, this calculation will also lead to the stronger statement that there exists a constant $C_t$ such that
$$
\E\Bigl[ \sup_{s \le t}  |Z^\ep_s - Z^{\ep'}_s|\Bigr] \le  C_t \max(\ep,\ep')
$$
This implies that the $Z^\ep_\cdot$ form a Cauchy sequence for the weak topology on $\mathcal P \bigl(C([0,+\infty),\T \times \R)\bigr)$.  
So in particular they converges towards a stochastic process $Z_\cdot$.  

\medskip
{\sl Step 4. Characterization of the limit process $Z_\cdot$.}
In order to prove that the limit process $Z_\cdot$ is a solution fo the nonlinear ODE~\eqref{eq:ODENL} with the original interaction force $W'$, we must for instance prove that for every time $t >0$,
\begin{equation} \label{estim:Cont}
\E_{Z_\cdot} \Bigl[  \Bigl\vert X_t - X_0 - \int_0^t  V_s \,ds  \Bigr\vert \Bigr]=0,
\qquad
\E_{Z_\cdot} \Bigl[  \Bigl\vert V_t - V_0 + \int_0^t  \E_{\bar Z_\cdot}\bigl[ W'(X_s-\bar X_s)\bigr] \,ds  \Bigr\vert \Bigr]=0.
\end{equation}
The first point is simple. For a fixed time $t$, the application that maps a trajectory $\tilde Z_\cdot \mapsto \Bigl\vert \tilde X_t - \tilde X_0 - \int_0^t  \tilde V_s \,ds  \Bigr\vert$ is continuous on $C([0,+\infty),\T \times \R)$. It implies, see \cite[p.30]{Bill}
that the application that sent a process $Z_\cdot$ to the first expectation in~\eqref{estim:Cont} is continuous. Since the expectation is already equals to zero for the approximated process $Z^\ep_\cdot$, we can pass to the limit and get the required equality.

For the second expectation, we have to be more careful because of the  discontinuities in the interaction force. But if we define $F(x) := - \E_{Z_\cdot}\bigl[ W'(x-\bar X_s)\bigr]$ (where $Z_\cdot$ stand for the fixed limit process), then the bound on the time marginals $\rho_t$ of $X_\cdot$ implies that $F$ is continuous. Then, the application
$$
\tilde Z_\cdot \mapsto \E_{\tilde Z_\cdot} \Bigl[  \Bigl\vert V_t - V_0 - \int_0^t  F(X_s) \,ds  \Bigr\vert \Bigr],
$$
 is continuous. The problem is now that the previous expectation is not $0$ for the approximated process. It will vanish for the approximated process only if we replace $F$ by  $F^\ep(x) = - \E_{\bar Z^\ep_\cdot}\bigl[ W'_\ep(x-\bar X_s)\bigr]$. But even if it is not emphasize, we have proved in the second step that $F^\ep$ is a Cauchy sequence in $L^1(\T)$, and in particular it converges strongly in the $L^1$-norm towards $F$. This and the uniform bound on the density  allows to pass to the limit and we obtain  that the second expectation in~\eqref{estim:Cont} also vanishes. This concludes the proof.

\newcommand{\etalchar}[1]{$^{#1}$}
\def\cprime{$'$} \def\cprime{$'$} \def\cprime{$'$}

\end{document}